\newcommand\eqdef{=\vcentcolon}
\setlist[description]{leftmargin=\parindent,labelindent=\parindent}
\tikzset{every picture/.style={line width=0.6pt}, rounded corners=3pt}
\definecolor{persimmon}{rgb}{0.93, 0.35, 0.0}
\definecolor{cssgreen}{rgb}{0.0, 0.5, 0.0}
\author{Clément Aubert%
	\thanks{\texttt{e-mail: \href{mailto:caubert@augusta.edu}{caubert@augusta.edu}}. Some of this work was done when I was supported by the NSF grant 1420175 and collaborating with Patricia Johann, \url{http://www.cs.appstate.edu/~johannp/}.}
}
\theoremstyle{definition}
\newtheorem{theorem}{Theorem}
\newtheorem{lemma}{Lemma}
\newtheorem{definition}{Definition}
\newtheorem{remark}{Remark}
\DeclareMathOperator{\op}{op}
\DeclareMathOperator{\id}{id}
\DeclareMathOperator{\Obj}{Obj}
\DeclareMathOperator{\ev}{ev}
\DeclareMathOperator{\eval}{ev}
\newcommand{\cat}[1]{\textbf{#1}}
\newcommand{\natt}{\xrightarrow{\bullet}} %
\newcommand{\ccat}[1]{\mathbb{#1}} %
\newcommand{\forg}{\mathsf{forg}}
\DeclareMathOperator{\Hom}{Hom}
\DeclareMathOperator{\AHom}{AHom}
\newcommand{\dst}{\gamma} %
\newcommand{\one}{\mathbf{1}}
\newcommand{\zero}{\mathbf{0}}
\newcommand{\expo}{\Rightarrow}
\newcommand{\dupl}{\delta}
\newcommand{\oeq}{\mathsf{E}}
\newcommand{\meq}{\mathsf{e}}
\newcommand{\mm}{\multimap}
\newcommand{\texpo}{\overset{*}{\expo}}
\newcommand{\assoc}{\alpha}
\newcommand{\lunitor}{\lambda}
\newcommand{\runitor}{\rho}
\newcommand{\sym}{s}
\newcommand{\monad}[1]{\mathcal{#1}}
\newcommand{\unit}{\eta}
\newcommand{\mult}{\mu}
\DeclareMathOperator{\lst}{lst} %
\DeclareMathOperator{\rst}{rst} %
\DeclareMathOperator{\sst}{sst} %
\newcommand{\liftkl}[1]{#1^{\#}}
\DeclareMathOperator{\codom}{cod}
\newcommand{\is}[1]{\underline{#1}}
\newcommand{\slic}[2]{\ccat{#1} {/} #2} %
\newcommand{\coslic}[2]{#1 \backslash \ccat{#2}} %
\newcommand{\pull}[1]{#1^{\Delta}}%
\newcommand{\comp}[1]{\Sigma_{#1}} %
\newcommand{\pullra}[1]{\Pi_{#1}} %
\newcommand{\counit}{\epsilon}
\newcommand{\reind}[1]{{#1}^*} %
\newcommand{\opreind}[1]{{#1}_*} %
\newcommand{\alg}[1]{\bar{#1}}
\title{Categories for Me, and You?\footnote{The title echoes the notes of Olivier Laurent, available at \url{https://perso.ens-lyon.fr/olivier.laurent/categories.pdf}.}}
\begin{document}
\maketitle

\epigraph{This result is \textbf{folklore}, which is a technical term for a method of publication in category theory. It means that someone sketched it on the back of an envelope, mimeographed it (whatever that means) and showed it to three people in a seminar in Chicago in 1973, except that the only evidence that we have of these events is a comment that was overheard in another seminar at Columbia in 1976. Nevertheless, if some younger person is so presumptuous as to write out a proper proof and attempt to publish it, they will get shot down in flames.}{\href{http://math.andrej.com/2012/09/28/substitution-is-pullback/comment-page-1/\#comment-21991}{Paul Taylor}}

\tableofcontents

\chapter*{Disclaimers}

\section*{Purpose}

Those notes are an expansion of a document whose first purpose was to remind myself the following two equations:\footnote{And yet somehow they failed with that respect, until Fredrik Nordvall Forsberg kindly pointed out that I swapped the definition of mono and epi (and of terminal and initial) in the first version of that document!}

\blockquote{Mono = injective = faithful\\Epi = surjective = full}

I am \emph{not} an expert in category theory, and those notes should \emph{not} be trusted\footnote{That being said, those notes were carefully written, and \emph{precise} references are given when available.}.
However, if it happens that someone can save the time that was lost tracking the definition of locally cartesian closed category (\autoref{def:lcc}), of the cartesian structure in slice categories (\autoref{sec:cartesian-structure-in-slice}), or of the \enquote{pseudo-cartesian structure} on Eilenberg–Moore categories (\autoref{sec:em}), then those notes will have fulfilled their goal of giving to those not present in that seminar in Chicago in 1973 a chance to find a proper definition, and detailed proofs.

I'm not intending to be as presumptuous as to try to publish those notes, but plan on continuing on tuning them as I see fit.

\section*{Conventions}

Those notes are not self-contained (for instance, the definition of \enquote{commuting diagram} is supposed to be known), but they are aiming at being as uniform as possible.
The references point to either the most simple and accessible description (in the case e.g.\ of the definition of binary product) or to the only known reference.
When a structure is known under different names, they are listed.
Some of the subscripts are dropped when they can be inferred from context.

In Category Theory, there are as many notational conventions as \rule{1.1cm}{0.4pt} (fill in the blank), but the following one will be used:

\begin{center}
	\begin{tabular}{r l l}
		Objects                     & \(A\), \(B\), \(C\), \(D\), \(E\), \(I\), \(J\), \(P\), \(X\), \(Y\), \(Z\)                         \\
		Morphisms                   & \(e\), \(f\), \(g\), \(h\), \(k\), \(m\), \(p\), \(v\)                                              \\
		Categories                  & \(\ccat{A}\), \(\ccat{B}\), \(\ccat{C}\), \(\ccat{D}\), \(\ccat{E}\)                                \\
		Functors                    & \(F\), \(G\), \(T\), \(U\)                                                                          \\
		Natural Transformations     & \(\alpha\)                                                                                          \\
		Monads                      & \(\monad{T}\)                                                                                       \\
		Object in Slice Category    & \((X, f_X)\), \((Y, f_Y)\), \((A, f_A)\)                                    & (\autoref{sec:slice}) \\
		Morphisms in Slice Category & \(\is{f}\), \(\is{g}\), \(\is{k}\), \(\is{l}\), \(\is{m}\)                  & (\autoref{sec:slice}) \\
		\(\monad{T}\)-algebras      & \(\alg{A} = (A, f_A)\), \(\alg{B} = (B, f_B)\)                              & (\autoref{sec:em})    \\
	\end{tabular}
\end{center}

Sometimes, those symbols will be sub- or superscripted with symbols, such as number or object's name, for the sake of clarity.

\chapter{On Categories, Functors and Natural Transformations}

\section{Basic Definitions}

\begin{definition}[Category]
	\label{def:cat}
	A category \(\ccat{C}\) consists of
	\begin{description}[style=unboxed]
		\item[a class of objects (or elements)] denoted \(\Obj(\ccat{C})\),
		\item[a class of morphisms (or arrows, maps)] between the objects, denoted \(\Hom_{\ccat{C}}\).
	\end{description}
	For a particular morphim \(f\), we write \(f : A \to B\) if \(A\) and \(B\) are objects in \(\ccat{C}\), call \(A\) (resp.\ \(B\)) the \emph{domain} (resp.\ the \emph{co-domain}) \emph{of \(f\)} and write \(\Hom_{\ccat{C}}(A,B)\) (or \(\textrm{Mor}_{\ccat{C}}(A,B)\), \(\ccat{C}(A,B)\)) for the collection of all the morphisms in \(\ccat{C}\) between \(A\) and \(B\).
	For every three objects \(A\), \(B\) and \(C\) in \(\ccat{C}\), the \emph{composition of \(f : A \to B\) and \(g : B \to C\)} is written as \(g \circ f\) (or \(gf\), \(f;g\), \(fg\)), and its domain (resp.\ co-domain) is \(A\) (resp.\ \(C\)).

	The classes of objects and of morphisms, together with the definition of composition, should be such that the following holds:
	\begin{description}[style=unboxed]
		\item[Associativity] for every \(f : A \to B\), \(g : B \to C\) and \(h : C \to d\), \(h \circ (g \circ f) = (h \circ g) \circ f\)
		\item[Identity] for every object \(A\), there exists a morphism \(\id_A: A \to A\) called the identity morphism for \(A\), such that for every morphism \(f : B \to A\) and every morphism \(g : A \to C\), we have \(\id_A \circ f = f\) and \(g \circ \id_A = g\).
	\end{description}
\end{definition}

Composition and identity can often be inferred from the classes of objects and morphisms, and will be left implicit when this is the case.

The notion of \emph{isomorphism}, written \(\cong\) and used in the two following definitions, is formally introduced in \autoref{def:prop-of-morphisms}.

\begin{definition}[Functors]
	Let \(\ccat{C}\) and \(\ccat{D}\) be two categories, a morphism\footnote{%
		Using the word \enquote{morphism} in the technical sense of \autoref{def:cat} would require to observe that categories and their functors form a category -- which is true --. We use this term here informally, sometimes \enquote{mapping} or \enquote{map} is used to avoid confusing the formal definition of morphism with the informal notion of \enquote{not necessarily structure-preserving relationship between two mathematical objects}.
	} \(F : \ccat{C} \to \ccat{D}\) is
	\begin{description}[style=unboxed]
		\item[a pseudo-functor] if \(\forall A, B, C \) in \(\ccat{C}\), \(\forall f : A \to B, g : B \to C\) in \(\ccat{C}\),
		      \begin{align}
			      F x_i         & \text{ is in } \ccat{D}                    \\
			      F \id_{x_i}   & \cong \id_{F x_i} \label{pseudo-fun-1}     \\
			      F (g \circ f) & \cong F(g) \circ F(f) \label{pseudo-fun-2}
		      \end{align}
		\item[a functor] if it is a pseudo-functor, \ref{pseudo-fun-1} and \ref{pseudo-fun-2} are equalities.
	\end{description}
\end{definition}

\begin{definition}[Natural transformation~{\cite[16]{Maclane1971}}]
	Given \(F, G : \ccat{D} \to \ccat{C}\) two functors, \emph{a natural transformation} \(\alpha : F \natt G\) assigns to every object \(A \) in \( \ccat{D}\) a morphism \(\alpha_A : FA \to G A\) in \(\ccat{C}\) such that \(\forall f : A \to B\) in \(\ccat{D}\), the following commutes in \(\ccat{C}\):

	{\centering

	\tikz{
		\node (FA) at (0, 0) {\(FA\)};
		\node (GA) at (0, -2) {\(GA\)};
		\node (FB) at (2, 0) {\(FB\)};
		\node (GB) at (2, -2) {\(GB\)};
		\draw [->] (FA) to node[above]{\(Ff\)} (FB);
		\draw [->] (GA) to node[above]{\(Gf\)} (GB);
		\draw [->] (FA) to node[left]{\(\alpha_A\)} (GA);
		\draw [->] (FB) to node[right]{\(\alpha_{B}\)} (GB);
	}

	}
	We then say that \(\alpha_A : FA \to FB\) is \emph{natural in \(A\)}.

	If, for every object \(A\), the morphism \(\alpha_A\) is an isomorphism, then \(\alpha\) is said to be a \emph{natural isomorphism} (or a natural equivalence, an isomorphism of functors).
\end{definition}

Since \(A\) is universally quantified, we simply write that \(\alpha\) is natural, and remove the \(A\) from the previous diagram.
Even if the \(\natt\) notation is convenient to distinguish natural transformations from functors and morphisms, we will omit it most of the time, and use \(\to\) for natural transformation, trusting the reader to understand whenever we are refering to a natural transformation or some other construction.

\section{Properties of Morphisms, Objects, Functors, and Categories}

\begin{definition}[Properties of morphisms]
	\label{def:prop-of-morphisms}
	Let \(F : \ccat{C} \to \ccat{D}\) be a functor, a morphism \(f : X \to Y\) in \(\ccat{C}\) is
	\begin{description}[style=unboxed]
		\item[an epimorphism (or onto, right-cancellative)] if for all \(g_1, g_2 : Z \to X \), \(g_1 \circ f = g_2 \circ f \implies g_1 = g_2\).
		      We write \(\twoheadrightarrow \).
		\item[a monomorphism (or left-cancellative)] if for all \(g_1, g_2 : Z \to X \), \(f \circ g_1 = f \circ g_2 \implies g_1 = g_2\).
		      We write \(\rightarrowtail \) or \(\hookrightarrow\) (but this last one is often reserved for inclusion morphisms).
		\item[a bimorphism] if it is a monomorphism and an epimorphism.
		      We write \(\xrightarrow{\sim}\).
		\item[a retraction (has a right inverse)] if there exists \(g : Y \to X \) such that \(f \circ g = \id_Y \).
		      Then \(g\) is a \emph{section} of \(f\).
		\item[a section (has a left inverse)] if there exists \(g : Y \to X \) such that \(g \circ f = \id_X \).
		      Then \(g\) is a \emph{retraction} of \(f\).
		\item[an isomorphism] if there exists \(g : Y \to X \) such that \(f \circ g = \id_Y\) and \(g \circ f = \id_X\).
		      We write \(\cong\), and \(f^{-1}\) for \(g\).
		\item[an endomorphism] if \(X = Y\).
		\item[an automorphism] if it is both an isomorphism and an endomorphism.
		\item[over \(u\) in \(\ccat{D}\)] if \(F f = u\)
		\item[cartesian over (or above) \(u : I \to J\) in \(\ccat{D}\) (or a cartesian, or terminal, lifting of \(u\))] if \(F f = u\) and for all \(Z\), for all \(g : Z \to Y\) in \(\ccat{C}\) for which \(F g = u \circ w\) for some \(w: F Z \to I \), there is a unique \(h : Z \to X\) in \(\ccat{C}\) such that \(F h = w \) and \(f \circ h = g\).

			      {\centering

				      \tikz{
					      \node (catC) at (-3, -1) {In \(\ccat{C}\)};
					      \node (Z) at (0, 0) {\(Z\)};
					      \node (X) at (0, -2) {\(X\)};
					      \node (Y) at (2, -2) {\(Y\)};
					      \draw [->, dashed] (Z) to node[left] {\(h\)} (X);
					      \draw [->] (X) to node[below]{\(f\)} (Y);
					      \draw [->] (Z) to node[above]{\(g\)} (Y);
					      \begin{scope}[shift={(0,-4)}]
						      \node (catD) at (-3, -1) {In \(\ccat{D}\)};
						      \node (FZ) at (0, 0) {\(FZ\)};
						      \node (I) at (0, -2) {\(I\)};
						      \node (J) at (2, -2) {\(J\)};
						      \draw [->] (FZ) to node[left] {\(w\)} (I);
						      \draw [->] (I) to node[below]{\(u\)} (J);
					      \end{scope}
				      }

			      }
		      We write \(u^{\S}_X\) for the cartesian morphism over \(u\) with codomain \(X\).
		      For a reason that will become clear with \autoref{def:re-indexing}, we write \(u^*X\) for the domain of \(u^{\S}(X)\).

		      It used to be the case that cartesian morphisms were called \enquote{strong cartesian}, the qualification of \enquote{cartesian} being reserved for the case where \(w = \id_{I}\) \cite[Appendix B]{Winskel1995}.

		\item[cartesian] if it is cartesian over \(F f \).
		\item[opcartesian over (or above) \(u : I \to J\) in \(\ccat{D}\)] if \(F f = u\) and for all \(Z\), for all \(g : X \to Z\) in \(\ccat{C}\) for which \(F g = w \circ u \) for some \(w: J \to F Z \), there is a unique \(h : Y \to Z\) in \(\ccat{C}\) such that \(F h = w \) and \(h \circ f= g\).

			      {\centering

				      \tikz{
					      \node (catC) at (-3, -1) {In \(\ccat{C}\)};
					      \node (Z) at (2, 0) {\(Z\)};
					      \node (X) at (0, -2) {\(X\)};
					      \node (Y) at (2, -2) {\(Y\)};
					      \draw [->, dashed] (Y) to node[right] {\(h\)} (Z);
					      \draw [->] (X) to node[below]{\(f\)} (Y);
					      \draw [->] (X) to node[left]{\(g\)} (Z);
					      \begin{scope}[shift={(0,-4)}]
						      \node (catD) at (-3, -1) {In \(\ccat{D}\)};
						      \node (FZ) at (2, 0) {\(FZ\)};
						      \node (I) at (0, -2) {\(I\)};
						      \node (J) at (2, -2) {\(J\)};
						      \draw [->] (J) to node[right] {\(w\)} (FZ);
						      \draw [->] (I) to node[below]{\(u\)} (J);
					      \end{scope}
				      }

			      }

		      We write \(u_{\S}^X\) for the opcartesian morphism over \(u\) with domain \(X\).
		      For a reason that will become clear with \autoref{def:opre-indexing}, we write \(u_*X\) for the co-domain of \(u_{\S}(X)\).
		\item[vertical] if \(F f =\id_{Fx} \).
	\end{description}
	The terms over, cartesian over, opcartesian over and vertical are mostly used when \(F\) is a fibration (\autoref{def:fibration}).
\end{definition}

\begin{definition}[Properties of objects]
	An object \(A \) in \( \ccat{C}\) is
	\begin{description}[style=unboxed]
		\item[terminal (or final)] if for all \(B\) in \( \ccat{C} \), there exists a unique morphim \(f: B \to A\).
		      Such an object is denoted \(\one\) (or \(t\)) and is unique, and the unique morphism \(A \to \one\) is denoted \(!_A\).
		\item[initial (or co-terminal, universal)] if for all \(B\) in \( \ccat{C} \), there exists a unique morphim \(f: A \to B\).
		      Such an object is denoted \(\zero\) (or \(i\)) and is unique, and the unique morphism \(0 \to A\) is denoted \(!^A\).
		\item[strict initial] if it is initial and every morphism \(f : B \to A\) is an isomorphism.
		\item[zero (or null)] if it is both initial and terminal.
	\end{description}
\end{definition}

\begin{definition}[Properties of categories]
	\label{def-prop-cat}
	A category \(\ccat{C}\) has
	\begin{description}[style=unboxed]
		\item[(cartesian binary) product {\cite[35]{Aluffi2009}}] if \(\forall A_1, A_2 \) in \( \ccat{C}\), \(\exists B\) in \( \ccat{C}\) and \(\exists \pi_i : B \to A_i\) for \(i \in \{1, 2\}\), such that \(\forall f_i : D \to A_i\), \(\exists ! v : D \to B\) such that the following commutes:

		      {\centering

		      \tikz{
			      \node (d) at (0,0) {\(D\)};
			      \node (c) at (0, -2) {\(B\)};
			      \node (c1) at (-2, -2) {\(A_1\)};
			      \node (c2) at (2, -2) {\(A_2\)};
			      \draw [->] (d) to node[left = 0.2cm]{\(f_1\)} (c1);
			      \draw [->] (d) to node[right = 0.2cm]{\(f_2\)} (c2);
			      \draw [->, dashed] (d) to node[right] {\(v\)} (c);
			      \draw [->] (c) to node[below]{\(\pi_1\)} (c1);
			      \draw [->] (c) to node[below]{\(\pi_2\)} (c2);
		      }

		      }

		      We call \(B\) the \emph{product of \(A_1\) and \(A_2\)}, and denote it with \(A_1 \times A_2\), \(v\) is the \emph{product of the morphisms \(f_1\) and \(f_2\)} and is written \(f_1 \times f_2\), and \(\pi_i\) are the \emph{(canonical) projections}.

		      For all \(A\) in \( \ccat{C}\), a morphism \(\dupl_A : A \to A \times A\) (sometimes written \(\Delta_A\)) is \emph{a diagonal morphism} if, for \(i \in \{1, 2\}\), \(\pi_i \circ \dupl_A = \id_A\).
		      Moreover, for all \(f : A \to B \) and \(g : A \to C\), we write \(\langle f, g\rangle\) for \((f \times g) \circ \dupl_A : A \to (B \times C)\).

		\item[all finite product] if it has all (cartesian binary) product and a terminal object~\cite[Definition 2.19]{Awodey2010}.
		\item[exponent] if \(\ccat{C}\) has (cartesian binary) product and \(\forall A_1, A_2 \) in \( \ccat{C}\), \(\exists B\) in \( \ccat{C}\) and \(f : B \times A_2 \to A_1 \) such that \(\forall C\) and \(g : C \times A_2 \to A_1\), \(\exists ! u : C \to B\) such that the following commutes:

		      {\centering

		      \tikz{
			      \node (d) at (-2,0) {\(C\)};
			      \node (c) at (-2,-2) {\(B\)};
			      \draw [->, dashed] (d) to node[left] {\(u\)} (c);
			      \node (dc2) at (0, 0) {\(C \times A_2\)};
			      \node (cc2) at (0, -2) {\(B \times A_2\)};
			      \node (c1) at (2, -2) {\(A_1\)};
			      \draw [->, dashed] (dc2) to node[left] {\(u \times \id_{A_2}\)} (cc2);
			      \draw [->] (cc2) to node[below]{\(f\)} (c1);
			      \draw [->] (dc2) to node[above]{\(g\)} (c1);
		      }

		      }

		      Then,
		      \begin{itemize}
			      \item \(B\) is \emph{an exponential object}, denoted \(A_2 \expo A_1\) (or \(A_1^{A_2}\)),
			      \item \(u\) is the \emph{transpose of \(g\)}, denoted \(\lambda g\) (or \(\tilde{g}\)),
			      \item \(f\) is \emph{the evaluation morphism}, denoted \(\eval_{A_1, A_2}\).
		      \end{itemize}
		      and we say that
		      \begin{itemize}
			      \item \(B\) is \emph{exponentiating} if \(\forall A_1\) in \( \ccat{C}\), \(A_1 \expo B\) exists,
			      \item \(B\) is exponentiable (or powerful) if \(\forall A_1 \) in \( \ccat{C}\), \(B \expo A_1\) exists.
		      \end{itemize}

		\item[pullback] if, for \(i \in \{1, 2\}\), \(\forall A_i, B\) in \( \ccat{C}\), \(f_i : A_i \to B\), there exists an unique \(C\) in \( \ccat{C}\), \(p_i : C \to A_i\) such that the following commutes:

		      {\centering

		      \tikz{
			      \node (x) at (0,2) {\(C\)};
			      \node (c1) at (0,0) {\(A_1\)};
			      \node (c2) at (2,2) {\(A_2\)};
			      \node (c) at (2,0) {\(B\)};
			      \draw [->] (x) to node[left]{\(p_1\)} (c1);
			      \draw [->] (x) to node[above]{\(p_2\)} (c2);
			      \draw [->] (c1) to node[below]{\(f_1\)} (c);
			      \draw [->] (c2) to node[right]{\(f_2\)} (c);
		      }

		      }

		      and such that for all \(D\), \(g_i : D \to A_i\) such that \(f_1 \circ g_1 = f_2 \circ g_2\), there exists a unique \(v : D \to C\) such that \(g_i = p_i \circ v\):

		      {\centering

		      \begin{tikzpicture}[node distance=2cm, auto]
			      \node (x) {\(C\)};
			      \node (c2) [right of=x] {\(A_2\)};
			      \node (c1) [below of=x] {\(A_1\)};
			      \node (c) [below of=c2] {\(B\)};
			      \node (z) [node distance=1.4cm, left of=x, above of=x] {\(D\)};
			      \draw[->] (x) to node {\(p_2\)} (c2);
			      \draw[->] (x) to node [swap] {\(p_1\)} (c1);
			      \draw[->] (c1) to node [swap] {\(f_1\)} (c);
			      \draw[->] (c2) to node {\(f_2\)} (c);
			      \draw[->, bend right] (z) to node [swap] {\(g_1\)} (c1);
			      \draw[->, bend left] (z) to node {\(g_2\)} (c2);
			      \draw[->, dashed] (z) to node {\(v\)} (x);
		      \end{tikzpicture}

		      }
		      This diagram is called the \emph{pullback diagram} (or \emph{cartesian square}), and we usually draw a right angle in the corner where \(C\) is, as follows:

		      {\centering

		      \begin{tikzpicture}[node distance=1.8cm, auto]%
			      \node (x) {\(C\)};
			      \node (c2) [right of=x] {};
			      \node (c1) [below of=x] {};
			      \draw[-] (x) to node {} (c2);
			      \draw[-] (x) to node [swap] {} (c1);
			      \draw[-] ($(x)+(0.1, -0.5)$) -- ($(x)+(0.5, -0.5)$) -- ($(x)+(0.5, -0.1)$);
		      \end{tikzpicture}

		      }

		      The object \(C\) is sometimes called
		      \begin{itemize}
			      \item \emph{the fibred product of \(A_1\) and \(A_2\) over \(B\)} and written \(A_1 \times^{B} A_2\) (or \(A_1 \times_{\ccat{C}}^{c} A_2\)),
			      \item \emph{the pullback of \(A_2\) along \(f_1\)} and written \(f_1 A_2\),
			      \item \emph{the pullback of \(A_1\) along \(f_2\)} and written \(f_2 A_1\).
		      \end{itemize}
		      The morphism \(p_1\) (resp.\ \(p_2\)) is sometimes called \emph{the pullback of \(f_2\) along \(f_1\)} (resp.\ \emph{the pullback of \(f_2\) along \(f_1\)}) and written \(f_2^* f_1\) (resp.\ \(f_1^* f_2\)).
		\item[pushout] if \(\ccat{C}^{\op}\) (cf. \autoref{def:construction_cat}) has pullback.
		\item[equalizers] if for all \(f, g: A \to B\), there exists an object \(\oeq_{f, g}\) and a morphism \(\meq_{f,g} :\oeq_{f, g} \to A\) such that \(f \circ \meq_{f, g} = g \circ \meq_{f, g}\) , and such that for all object \(Z\) and morphism \(m : Z \to A\) such that \(f \circ m = g \circ m\), there exists a unique \(u : Z \to \oeq_{f, g}\) such that \(\meq_{f, g} \circ u = m\).

			      {\centering

				      \begin{tikzpicture}
					      \node (C) at (0, 2) {\(Z\)};
					      \node (eq) at (0, 0) {\(\oeq_{f,g}\)};
					      \node (AtoB) at (3, 0) {\(A\)};
					      \node (TAtoB) at (5, 0) {\(B\)};
					      \draw[->, bend left] (AtoB) to node[above]{\(f\)}(TAtoB);
					      \draw[->, bend right] (AtoB) to node[below]{\(g\)}(TAtoB);
					      \draw[->, dashed] (C) to node[left]{\(\exists ! \,u\)} (eq);
					      \draw[->] (eq) to node[below]{\(\meq_{f,g}\)} (AtoB);
					      \draw[->] (C) to node[right]{\(m\)} (AtoB);
				      \end{tikzpicture}

			      }

	\end{description}
	A category \(\cat{C}\) is
	\begin{description}[style=unboxed]
		\item[the category \(\ccat{1}\)] if it has only one object (often written \(\ccat{1}\) as well) and one morphism.
		\item[monoidal] if it has
		      \begin{itemize}
			      \item a bifunctor \(\otimes: \ccat{C}\times\ccat{C} \to \ccat{C}\),
			      \item a neutral object \(I\) (a right and left identity),
			      \item natural isomorphisms
			            \begin{itemize}
				            \item \(\assoc_{A,B,C} : (A \otimes B) \otimes C \to A \otimes (B \otimes C)\) (the \emph{associator}),
				            \item \(\lunitor_{A} : I \otimes A \to A\) (the \emph{left unitor}),
				            \item \(\runitor_A : A \otimes I \to A\) (the \emph{right unitor})
			            \end{itemize}
		      \end{itemize}
		      such that for all \(A, B, C\) and \(D\), the following diagrams commute:

		      {\centering

		      \tikz%
		      {
			      \node (TL) at (0, 2) {\(((A \otimes B) \otimes C) \otimes D\)};
			      \node (TM) at (0, 0) {\((A \otimes (B \otimes C)) \otimes D\)};
			      \node (TR) at (0, -2) {\(A \otimes ((B \otimes C) \otimes D)\)};
			      \node (BL) at (7, 2) {\((A \otimes B) \otimes (C \otimes D) \)};
			      \node (BR) at (7, -2) {\(A \otimes (B \otimes (C \otimes D))\)};
			      \draw [->] (TL) to node[right]{\(\assoc_{A,B,C} \otimes \id_D\)} (TM);
			      \draw [->] (TM) to node[right]{\(\assoc_{A, B \otimes C, D}\)} (TR);
			      \draw [->] (TL) to node[above]{\(\assoc_{A \otimes B, C, D} \)} (BL);
			      \draw [->] (BL) to node[right]{\(\assoc_{A, B, C \otimes D}\)} (BR);
			      \draw [->] (TR) to node[below]{\(\id_{A} \otimes \assoc_{B, C, D}\)} (BR);
		      }

		      \tikz{
			      \node (TL) at (0,2) {\((A \otimes I) \otimes B\)};
			      \node (TR) at (4,2) {\(A \otimes (I \otimes B) \)};
			      \node (BM) at (2,0) {\(A \otimes B\)};
			      \draw [->] (TL) to node[above]{\(\assoc_{A,I,B}\)} (TR);
			      \draw [->] (TL) to node[left]{\(\runitor_{A} \otimes \id_B\)} (BM);
			      \draw [->] (TR) to node[right = 0.3cm]{\(\id_{A} \otimes \lunitor_{B}\)} (BM);
		      }

		      }

		\item[strict monoidal]
		      if it is monoidal and \(\assoc\), \(\lunitor\) and \(\runitor\) are identities,
		\item[symmetric monoidal]
		      if it is monoidal and it have an isomorphism \(\sym_{A,B} : A \otimes B \to B \otimes A\) such that the following three diagrams commute:

		      {\centering

		      \tikz{
			      \node (TL) at (0,2) {\(A \otimes I\)};
			      \node (TR) at (4,2) {\(I \otimes A\)};
			      \node (BM) at (2,0) {\(A\)};
			      \draw [->] (TL) to node[above]{\(\sym_{A,I}\)} (TR);
			      \draw [->] (TL) to node[left]{\(\runitor_{A}\)} (BM);
			      \draw [->] (TR) to node[right = 0.3cm]{\(\lunitor_{A}\)} (BM);
		      }
		      \tikz{
			      \node (TL) at (0,0) {\(A \otimes B\)};
			      \node (TR) at (4,0) {\(A \otimes B\)};
			      \node (BM) at (2,2) {\(B \otimes A\)};
			      \draw [double distance=2pt] (TL) to node[below]{\(\id_{A\otimes B}\)} (TR);
			      \draw [->] (TL) to node[left]{\(\sym_{A,B}\)} (BM);
			      \draw [->] (BM) to node[right]{\(\sym_{B, A}\)} (TR);
		      }

		      \tikz{
			      \node (TL) at (0,2) {\((A \otimes B) \otimes C\)};
			      \node (TR) at (5,2) {\((B \otimes A) \otimes C\)};

			      \node (ML) at (0,0) {\(A \otimes (B \otimes C)\)};
			      \node (MR) at (5,0) {\(B \otimes (A \otimes C)\)};

			      \node (BL) at (0,-2) {\((B \otimes C) \otimes A\)};
			      \node (BR) at (5,-2) {\(B \otimes (C \otimes A)\)};

			      \draw [->] (TL) to node[above]{\(\sym_{A,B} \otimes \id_C\)} (TR);
			      \draw [->] (TL) to node[left]{\(\assoc_{A, B, C}\)} (ML);
			      \draw [->] (ML) to node[left]{\(\sym_{A, B \otimes C} \)} (BL);
			      \draw [->] (BL) to node[below]{\(\assoc_{B \otimes C, A}\)} (BR);
			      \draw [->] (TR) to node[right]{\(\assoc_{B, A, C}\)} (MR);
			      \draw [->] (MR) to node[right]{\(\id_{B} \otimes \sym_{A, C}\)} (BR);
		      }

		      }

		\item[closed monoidal]
		      if it is monoidal and, for all \(B\), the functor \(\otimes_B : \_ \to \_ \otimes B\)\footnote{This functor can be thought of as a \enquote{partially applied bifunctor}.} has a right adjoint (\autoref{def:prop-functors}) \(\Rightarrow_B : \_ \to B \Rightarrow \_\).
		\item[Cartesian monoidal]
		      if its monoidal structure is given by the (binary cartesian) product: the bifunctor \(\otimes\) is the product, the neutral object is the terminal object \(\one\), and, for every \(A\), \(B\) and \(C\),
		      \begin{itemize}
			      \item \(\assoc_{A,B,C} : (A \times B) \times C \to A \times (B \times C)\), the associator, is \(\langle \pi_1 \circ \pi_1, \pi_2 \times \id_C \rangle\),
			      \item \(\lunitor_{A} : \one \times A \to A\), the left unitor, is \(\pi_2\),
			      \item \(\runitor_A : A \times \one \to A\), the right unitor, is \(\pi_1\).
		      \end{itemize}

		      Note that every cartesian monoidal category is symmetric monoidal, with \(\sym_{A, B} = \pi_2 \times \pi_1 : A \times B \to B \times A\).
		\item[Cartesian closed] if it has a terminal object and every object is exponentiating, or, equivalently, if it has a terminal object, and every pair of objects have an exponent and a product.
		\item[discrete] if the only morphisms are the identities.
		\item[a preorder category] if there is at most one morphism between any two objects.
		\item[well-pointed] if it has a terminal object \(\one\) and for all \(f_1, f_2 : A \to B\) such that \(f_1 \neq f_2\), there exists \(p : \one \to A\), a \emph{global object} (or \emph{point}) such that \(f_1 \circ p \neq f_2 \circ p\).
		\item[pointed] if it has a zero object.
	\end{description}
\end{definition}

We refer to \autoref{sec:cheat-cartesian} for a series of equalities concerning the binary product, the exponents and the associator for the product.

We will often omit the subscripts on the natural transformations, objects and morphisms above when they can be infered from context.
We also work up to associativity most of the time. %

\begin{definition}[Properties of functors]
	\label{def:prop-functors}
	Given two functors \(F : \ccat{D} \to \ccat{C}\) and \(G : \ccat{C} \to \ccat{D}\),
	\begin{description}[style=unboxed]
		\item[\(F\) is a left adjoint to \(G\) (and \(G\) is a right adjoint to \(F\)) {\cite[492]{Maclane1971}}] if for all \(D \) in \( \ccat{D}\), \(C\) in \( \ccat{C}\), \(\Hom_{\ccat{C}}(FD, C) \cong \Hom_{\ccat{D}} (D, GC)\) are natural in the variables \(C\) and \(D\), that is, \(F\) and \(G\) are equiped with natural transformations \(\eta : \id_{\ccat{C}} \natt F \circ G\) and \(\varepsilon : G \circ F \natt \id_{\ccat{D}}\) such that for all \(D\) in \( \ccat{D}\), \(C\) in \( \ccat{C}\), the following commutes:

		      {\centering

		      \tikz{
			      \node (GC1) at (0,2) {\(GC\)};
			      \node (GFGC) at (3,2) {\(GFGC\)};
			      \node (GC2) at (3,0) {\(GC\)};
			      \draw [->] (GC1) to node[above]{\(G (\eta_C)\)} (GFGC);
			      \draw [->] (GFGC) to node[right]{\(\varepsilon_{GC}\)} (GC2);
			      \draw [double distance=2pt] (GC1) to node[left = 0.3cm]{\(\id_{GC}\)} (GC2);
		      }
		      \tikz{
			      \node (FD1) at (0,2) {\(FD\)};
			      \node (GFFD) at (3,2) {\(FGFD\)};
			      \node (FD2) at (3,0) {\(FD\)};
			      \draw [->] (FD1) to node[above]{\( \eta_{FD}\)} (GFFD);
			      \draw [->] (GFFD) to node[right]{\(F(\varepsilon_{D})\)} (FD2);
			      \draw [double distance=2pt] (FD1) to node[left = 0.3cm]{\(\id_{FD}\)} (FD2);
		      }

		      }
		      We write \(F \dashv G\).
		\item[\(F\) is full] if for every \(D, E\) in \( \ccat{D}\), for all \(g : FD \to FE\), there exists \(h : D \to E\) such that \(g = F h\).
		\item[\(F\) is faithfull (or an embedding)] if for every \(D\), \(E\) in \( \ccat{D}\), for all \(f_1, f_2 : D \to E\), \(F f_1 = F f_2 \implies f_1 = f_2\).
		\item[\(F\) is fully faithful] if it is full and faithful.
		\item[\(F\) is contravariant (or a co-functor)] if for all \(f : A \to B\) in \(\ccat{D}\), \(Ff : Fb \to Fa\).
		\item[\(F\) is a bifunctor (or a binary functor)] if \(\ccat{D}\) is the product of two categories.
		\item[\(F\) is an endofunctor] if \(\ccat{D} = \ccat{C}\), and we write \(F^n\) for the application of \(F\) \(n\) times.
		\item[\(F\) is a (left) strong functor{~\cite[Definition 2.6.7]{Jacobs1999}}] if \(F\) is an endofunctor on a monoidal category \(\ccat{C}\) endowed with a \emph{(tensorial) (left) strength} \(\lst\), a natural transformation \(\lst_{A, B} : A \otimes FB \to F(A \otimes B)\) such that the following commutes:

		      {\centering

		      \tikz%
		      {
			      \node (TL) at (0, 2) {\((A \otimes B) \otimes FC\)};
			      \node (TR) at (0,-2) {\( F((A \otimes B) \otimes C)\)};
			      \node (BL) at (8, 2) {\( A \otimes (B \otimes FC) \)};
			      \node (BM) at (8, 0) {\(A \otimes F (B \otimes C)\)};
			      \node (BR) at (8, -2) {\(F(A \otimes (B \otimes C ))\)};
			      \draw [->] (TL) to node[left]{\(\lst_{A \otimes B, C }\)} (TR);
			      \draw [->] (TL) to node[above]{\(\assoc_{A, B, FC} \)} (BL);
			      \draw [->] (BL) to node[right]{\(\id_A \otimes \lst_{B, C}\)} (BM);
			      \draw [->] (BM) to node[right]{\(\lst_{A, B \otimes C}\)} (BR);
			      \draw [->] (TR) to node[above]{\(F(\assoc_{A, B, C})\)} (BR);
		      }

		      \tikz{
			      \node (TL) at (0,2) {\(I \otimes F A\)};
			      \node (TR) at (4,2) {\(F (I \otimes A) \)};
			      \node (BM) at (2,0) {\(FA\)};
			      \draw [->] (TL) to node[above]{\(\lst_{I,A}\)} (TR);
			      \draw [->] (TL) to node[left]{\(\lunitor_{FA}\)} (BM);
			      \draw [->] (TR) to node[right = 0.2cm]{\(F(\lunitor_{A})\)} (BM);
		      }

		      }
		\item[\(F\) is a right strong functor] if \(F\) is an endofunctor on a monoidal category \(\ccat{C}\) endowed with a \emph{(tensorial) right strength} \(\rst\), a natural transformation \(\rst_{A, B} : FA \otimes B \to F(A \otimes B)\) such that the following commutes:

		      {\centering

		      \tikz%
		      {
			      \node (TL) at (0, 2) {\(FA \otimes (B \otimes C)\)};
			      \node (TR) at (0,-2) {\( F(A \otimes (B \otimes C))\)};
			      \node (BL) at (6, 2) {\( (FA \otimes B) \otimes C \)};
			      \node (BM) at (6, 0) {\(F(A \otimes B) \otimes C\)};
			      \node (BR) at (6, -2) {\(F((A \otimes B) \otimes C)\)};
			      \draw [->] (TL) to node[left]{\(\rst_{A, B \otimes C }\)} (TR);
			      \draw [->] (TL) to node[above]{\(\assoc^{-1}_{FA, B, C} \)} (BL);
			      \draw [->] (BL) to node[right]{\(\rst_{A, B} \otimes \id_C\)} (BM);
			      \draw [->] (BM) to node[right]{\(\rst_{A \otimes B, C}\)} (BR);
			      \draw [->] (TR) to node[above]{\(F(\assoc^{-1}_{A, B, C})\)} (BR);
		      }

		      \tikz{
			      \node (TL) at (0,2) {\(F A \otimes I\)};
			      \node (TR) at (4,2) {\(F (A \otimes I) \)};
			      \node (BM) at (2,0) {\(FA\)};
			      \draw [->] (TL) to node[above]{\(\rst_{A, I}\)} (TR);
			      \draw [->] (TL) to node[left]{\(\runitor_{FA}\)} (BM);
			      \draw [->] (TR) to node[right = 0.2cm]{\(F(\runitor_{A})\)} (BM);
		      }

		      }
	\end{description}
\end{definition}
Note that a fully faithful functor may not be an isomorphism of categories, and that a \enquote{strong} functor usually refers to a \emph{left} strong functor.

\section{Constructions over Categories and Functors}

\begin{definition}[Constructions over categories]
	\label{def:construction_cat}
	Given \(\ccat{B}\) and \(\ccat{C}\) two categories, and \(B\) an object of \(\ccat{B}\),
	\begin{description}[style=unboxed]
		\item[\(\ccat{B}\) is a subcategory of \(\ccat{C}\)] if \(\ccat{B}\) is a category, whose objects are a subcollection of objects of \(\ccat{C}\), and whose morphisms are a subcollection of morphisms of \(\ccat{C}\).
		\item[The arrow category \(\ccat{B}^{\to}\)~{\cite[28]{Jacobs1999}} (or the category of arrows of \(\ccat{B}\) \(\ccat{B}^2\)~{\cite[40--41]{Maclane1971}})] has
		      \begin{description}[style=unboxed]
			      \item[for objects] morphisms of \(\ccat{B}\),
			      \item[for morphisms] couples \((u, g) : f_1 \to f_2\) of morphisms of \(\ccat{B}\) %
			            such that the following commutes:

			            {\centering

			            \tikz{\node (b1) at (0,0) {\(B_1\)};
				            \node (b'1) at (0,-2) {\(B'_1\)};
				            \node (b2) at (2,0) {\(B_2\)};
				            \node (b'2) at (2,-2) {\(B'_2\)};
				            \draw [->] (b1) to node[above]{\(g\)} (b2);
				            \draw [->] (b1) to node[left]{\(f_1\)} (b'1);
				            \draw [->] (b'1) to node[above]{\(u\)} (b'2);
				            \draw [->] (b2) to node[right]{\(f_2\)} (b'2);
			            }

			            }
		      \end{description}
		\item[The slice category \(\slic{B}{B}\)~{\cite[28]{Jacobs1999}} (or the category of object over \(B\), or over category)] is the subcategory of \(\ccat{B}^{\to}\), which have
		      \begin{description}[style=unboxed]
			      \item[for objects] morphisms of \(\ccat{B}\) whose codomain is \(B\),
			      \item[for morphisms] the morphisms of \(\ccat{B}^{\to}\) whose first component is \(\id_B\).
		      \end{description}
		\item[The co-slice category \(\coslic{B}{B}\)~{\cite[28]{Jacobs1991}} (or the under category, \((B \downarrow \ccat{B})\) or \((B / \ccat{B})\))] is the subcategory of \(\ccat{B}^{\to}\), which have
		      \begin{description}[style=unboxed]
			      \item[for objects] morphisms of \(\ccat{B}\) whose domain is \(B\),
			      \item[for morphisms] the morphisms of \(\ccat{B}^{\to}\) whose second component is \(\id_B\).
		      \end{description}
		\item[The opposite category \(\ccat{B}^{\op}\)~{\cite[33]{Maclane1971}}] has
		      \begin{description}[style=unboxed]
			      \item[for objects] objects of \(\ccat{B}\),
			      \item[for morphisms] \(f^{\op} : B \to A\) for each morphism \(f : A \to B\) in \(\ccat{B}\).
		      \end{description}
		\item[The functor category \(\ccat{B}^{\ccat{C}}\)~{\cite[40]{Maclane1971}} (or\(\text{Funct}(\ccat{C}, \ccat{B})\))] has
		      \begin{description}[style=unboxed]
			      \item[for objects] functors \(F : \ccat{C} \to \ccat{B}\),
			      \item[for morphisms] natural transformations between functors from \(\ccat{C}\) to \(\ccat{B}\).
		      \end{description}

	\end{description}
\end{definition}

\begin{definition}[Constructions over functors]
	Given \(\ccat{A}\), \(\ccat{B}\) and \(\ccat{C}\) three categories, \(F : \ccat{B} \to \ccat{C}\) and \(G : \ccat{A} \to \ccat{C}\) two functors,
	\begin{description}[style=unboxed]
		\item[the opposite of \(F\)] is the unique functor \(F^{\op} : \ccat{B}^{\op} \to \ccat{C}^{\op}\).
		\item[the comma category \((G \downarrow F)\){~\cite[45--46]{Maclane1971}}] has
		      \begin{description}[style=unboxed]
			      \item[for objects] triples \((A, B, f)\) such that \(A \) in \( \ccat{A}\), \(B\) in \( \ccat{B}\) and \(f : GA \to FB\) is a morphism in \(\ccat{C}\).
			      \item[for morphisms] pairs \((g, h) : (A_1, A_2, f_1) \to (B_1, B_2, f_2)\) of morphisms in \(\ccat{A}\) and \(\ccat{B}\) respectively, such that \(g: A_1 \to A_2\), \(h : B_1 \to B_2\)) the following commutes:

			            {\centering

			            \tikz{\node (ga) at (0,0) {\(GA_1\)};
				            \node (fb) at (0,-2) {\(FB_1\)};
				            \node (ga') at (2,0) {\(GA_2\)};
				            \node (fb') at (2,-2) {\(FB_2\)};
				            \draw [->] (ga) to node[above]{\(Gg\)} (ga');
				            \draw [->] (ga) to node[left]{\(f_1\)} (fb);
				            \draw [->] (fb) to node[above]{\(Fh\)} (fb');
				            \draw [->] (ga') to node[right]{\(f_2\)} (fb');
			            }

			            }

		      \end{description}
	\end{description}
\end{definition}

\begin{remark}%
	[Comma category as a general construction]~
	\begin{itemize}
		\item If \(\ccat{A} = \ccat{C}\), \(G = \id_{\ccat{C}}\) and \(\ccat{B} = \ccat{1}\), then if \(F \ccat{1} = c\) for \(c \) in \( \ccat{C}\), \((G \downarrow F)\) is precisely \(\slic{C}{c}\) the slice category over \(c\).
		\item If \(\ccat{B} = \ccat{C}\), \(F = \id_{\ccat{C}}\) and \(\ccat{A} = \ccat{1}\), then if \(G \ccat{1} = c\) for \(c \) in \( \ccat{C}\), \((G \downarrow F)\) is precisely \(\coslic{c}{C} \) the coslice category over \(c\).
		\item If \(F\) and \(G\) are the identity functor of \(\ccat{C}\), then \((F \downarrow G)\) is the arrow category \(\ccat{C}^{\to}\).
		\item If \(F\) and \(G\) are both functor with domain \(\ccat{1}\), and \(F \ccat{1} = A\), \(G \ccat{1} = B\), then \((F \downarrow G)\) is the discrete category whose objects are morphisms from \(A\) to \(B\).
	\end{itemize}
\end{remark}

\chapter{On Fibrations}

\begin{definition}[Fibration {\cites[Definition 1.2.]{Hofstra2008}[49]{Jacobs1999}}]
	\label{def:fibration}
	The functor \(U : \ccat{E} \to \ccat{B}\) is
	\begin{description}[style=unboxed]
		\item[a fibration]if for every \(Y \) in \( \ccat{E}\) and \(u : I \to UY\) in \(\ccat{B}\), there is a cartesian morphism \(f: X \to Y\) in \(\ccat{E}\) above \(u\).
		\item[an opfibration (or cofibration)~{\cite[Section 9.1]{Jacobs1991}}] if \(U^{\op} : \ccat{E}^{\op} \to \ccat{B}^{\op}\) is a fibration.
		\item[a bifibraction] if it is a fibration and an opfibration.
		\item[a cloven (op)fibration] if it is an (op)fibration and has a \emph{cleavage}, i.e.\ a choice of (op)cartesian liftings.
	\end{description}
\end{definition}

\begin{definition}[Fibre]
	If \(U : \ccat{E} \to \ccat{B}\) is a fibration and \(X\) is an object in \(\ccat{B}\), then we write \(\ccat{E}_X\) and call \emph{the fibre over \(X\)} the category whose
	\begin{description}[style=unboxed]
		\item[objects] are the objects \(Y\) in \(\ccat{E}\) such that \(UY = X\),
		\item[morphisms] are the morphisms \(f\) in \(\ccat{E}\) such that \(Uf = \id_X\).
	\end{description}
\end{definition}

\begin{definition}[Re-indexing (or substitution, relabeling, inverse image, transition) functor~{\cites[268]{Johnstone2002}[48--49]{Jacobs1999}}]
	\label{def:re-indexing}
	Given \(U : \ccat{E} \to \ccat{B}\) a cloven fibration, for all \(f : X \to UP\) in \(\ccat{B}\), we define the \emph{re-indexing functor} \(\reind{f} : \ccat{E}_{UP} \to \ccat{E}_{X}\) as
	\begin{description}[style=unboxed]
		\item[on objects] \(\reind{f}P\) is the domain of \(f^{\S}_{P}\),
		\item[on morphisms] for \(k : P \to P'\), \(\reind{f}k\) is given by cartesianity of \(f^{\S}_{P'}\) along \(k \circ f^{\S}_P\):
	\end{description}

	{\centering

	\begin{tikzpicture}
		\node (f*P) at (0, 6) {\(\reind{f}P\)};
		\node (f*P') at (1, 4) {\(\reind{f}P'\)};
		\node (P) at (4, 6) {\(P\)};
		\node (P') at (5, 4) {\(P'\)};
		\draw[->] (P) to node[right]{\(k\)} (P');
		\draw[->, dashed] (f*P) to node[left]{\(\reind{f}k\)} (f*P');
		\draw[->] (f*P') to node[above]{\(f^{\S}_{P'}\)} (P');
		\draw[->] (f*P) to node[above]{\(f^{\S}_{P}\)} (P);
	\end{tikzpicture}

	}
\end{definition}

\begin{definition}[Opreindexing (or extension, sem) functor]
	\label{def:opre-indexing}
	Let \(U : \ccat{E}\to \ccat{B}\) be a cloven opfibration, \(f : UP \to Y\) be a morphism in \(\ccat{B}\).
	We define the \emph{opreindexing} functor \(\opreind{f} : \ccat{E}_{UP} \to \ccat{E}_{Y}\) as
	\begin{description}[style=unboxed]
		\item[on objects] \(\opreind{f} P\) is the codomain of \(f_{\S}^{P}\),
		\item[on morphisms] for \(k : P \to P'\), \(\opreind{f} k\) is given by opcartesianity of \(f_{\S}^{P}\) along \(f_{\S}^{P'} \circ k\).
	\end{description}

	{\centering

	\begin{tikzpicture}
		\node (sigmafP) at (4, 6) {\(\opreind{f}(P)\)};
		\node (sigmafP') at (5, 4) {\(\opreind{f}(P')\)};
		\node (P) at (0, 6) {\(P\)};
		\node (P') at (1, 4) {\(P'\)};
		\draw[->] (P) to node[right]{\(k\)} (P');
		\draw[->, dashed] (sigmafP) to node[left]{\(\opreind{f} k\)} (sigmafP');
		\draw[->] (P') to node[above]{\(f_{\S}^{P'}\)} (sigmafP');
		\draw[->] (P) to node[above]{\(f_{\S}^{P}\)} (sigmafP);
	\end{tikzpicture}

	}
\end{definition}

\begin{definition}[Properties of fibres~{\cite[27]{Tews2002}}]
	Let \(U : \ccat{E}\to \ccat{B}\) be a cloven opfibration, \(J\) be an object in \(\ccat{B}\), and \(A, B \) be in \( \ccat{E}_J\).
	The fibre \(\ccat{E}_{J}\) has \emph{fibred product} if
	\begin{itemize}
		\item the fibre \(\ccat{E}_{J}\) has product, denoted \(\times_{\ccat{E}_J}\) below
		\item \(\forall u : I \to J\) in \(\ccat{B}\), \(u^*(A \times_{\ccat{E}_J} B) \cong (u^* A) \times_{\ccat{E}_I} (u^* B)\).
	\end{itemize}
	it has \emph{fibred exponent} if
	\begin{itemize}
		\item the fibre \(\ccat{E}_{J}\) has exponent, denoted \(\expo_{\ccat{E}_J}\) below,
		\item \(\forall u : I \to J\) in \(\ccat{B}\), \(u^*(B \expo_{\ccat{E}_I} A) \cong (u^* B) \expo_{\ccat{E_J}}(u^*A)\).
	\end{itemize}
\end{definition}

\begin{definition}[Generic objects~{\cite[Definition 5.2.8]{Jacobs1999}}]
	Let \(U : \ccat{E} \to \ccat{B}\) be a fibration, an object \(X \) in \( \ccat{E}\) is
	\begin{description}[style=unboxed]
		\item[weak generic (or generic~{\cites[Definition 1.2.9]{Jacobs1991}[47--48]{Hermida1993}})] if for all \(Y\) in \( \ccat{E}\), there exists \(f:Y \to X\) and \(f\) is cartesian.
		\item[generic] if for all \(Y\) in \( \ccat{E}\), there exists a unique \(u: UY \to UX\) and there exists \(f : Y \to X\) cartesian over \(u\).
		\item[strong generic] if for all \(Y\) in \( \ccat{E}\), there exists a unique \(f : Y \to X\) and \(f\) is cartesian.
		\item[split generic {\cites[Definition 5.2.1]{Jacobs1999}[Definition 1.2.11]{Jacobs1991}}] if \(U\) is a split fibration, and there exists a collection of isomorphisms
		      \[\theta_I : \Hom_{\ccat{B}}(I, UX) \cong \Obj(\ccat{E}_I)\]
		      with \(\theta_J (u \circ v) = v^* (\theta_I u)\) for \(v : J \to I\).
	\end{description}
	The image \(U X\) in \(\ccat{B}\) is written \(\Omega\).
\end{definition}

\begin{definition}[Properties of fibrations]
	A fibration \(U : \ccat{E} \to \ccat{B}\) has
	\begin{description}[style=unboxed]
		\item[product {\cite[97]{Jacobs1999}}] if \(\ccat{B}\) has pullback and all re-indexing functor \(u^*\) has a right adjoint \(\Pi_u\) that respects in some way the Belk-Chevalley condition.
		\item[simple product {\cite[94]{Jacobs1999}}] if \(\ccat{B}\) has product and all substitution functors along the cartesian product projections \(\pi_{I, J} : I \times J \to I\) have a right adjoint \(\Pi_{I,J}\) that respects in some way the Belk-Chevalley condition.
		\item[simple \(\Omega\)-product] if \(\ccat{B}\) has product and simple product for cartesian projections of products with \(\Omega\).
		\item[exponent {\cite[Definition 3.9, p.~179]{Jacobs1993}}] if it has cartesian product and some functor has a fibred right adjoint.
		\item[fibred product~{\cite[42]{Hermida1993}}] if every fibre has fibred product.
		\item[a fibred terminal object~{\cite[42--43]{Hermida1993}}] if each fibre \(\ccat{E}_X\) has a terminal object \(\one_{\ccat{E}_X}\), and if reindexing preserves terminal object: \(\forall X, Y, f: U X \to U Y\), \(f^*\one_{\ccat{E}_Y} = \one_{\ccat{E}_X}\).
	\end{description}
	it is
	\begin{description}[style=unboxed]
		\item[a split fibration~{\cite[49--50]{Jacobs1999}}] if it is cloven, and additionally, \(\id^* = \id\) and \((v \circ u)^* = u^* \circ v^*\),
		\item[a polymorphic fibration {\cite[p. 471]{Jacobs1999}}] if it has a generic object, fibred finite product, and finite products in \(\ccat{B}\).
		\item [a partial order (or preordered) fibration~{\cites[233]{Katsumata2013}[43]{Jacobs1999}}]
		      if every fibre is a preorder category.
	\end{description}
\end{definition}

\begin{lemma}
	A fibration is faithful if and only if it is a partial order fibration.
\end{lemma}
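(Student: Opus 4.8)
The plan is to prove the two implications separately, after unwinding the statement to its combinatorial content: $U$ is faithful exactly when, for every object $X$ of $\ccat{B}$, the fibre $\ccat{E}_X$ has at most one morphism between any two of its objects. One direction is immediate from the definitions and does not even use the fibration hypothesis; the other is where the lifting of cartesian morphisms does the real work.

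For the forward direction (\emph{faithful} $\Rightarrow$ \emph{partial order fibration}), I would argue directly. Fix an object $X$ of $\ccat{B}$ and two parallel morphisms $f_1, f_2 : Y \to Y'$ of the fibre $\ccat{E}_X$. By definition of the fibre, $U f_1 = \id_X = U f_2$, so in particular $U f_1 = U f_2$, and faithfulness of $U$ forces $f_1 = f_2$. Hence every fibre is a preorder category, which is exactly the condition that $U$ be a partial order fibration.

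For the converse (\emph{partial order fibration} $\Rightarrow$ \emph{faithful}), take parallel morphisms $f_1, f_2 : Y \to Y'$ in $\ccat{E}$ with $U f_1 = U f_2$, and write $u : X \to X'$ for this common image, where $X = UY$ and $X' = UY'$; I must show $f_1 = f_2$. Since $U$ is a fibration, there is a cartesian lifting $u^{\S}_{Y'} : u^{*}Y' \to Y'$ of $u$ with codomain $Y'$. As $U f_i = u = u \circ \id_X$, the universal property of $u^{\S}_{Y'}$ applied with $w = \id_X$ yields, for each $i$, a unique $h_i : Y \to u^{*}Y'$ with $U h_i = \id_X$ and $u^{\S}_{Y'} \circ h_i = f_i$. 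The equalities $U h_i = \id_X$ say precisely that $h_1$ and $h_2$ are parallel morphisms in the fibre $\ccat{E}_X$ (both $Y$ and the domain $u^{*}Y'$ lie over $X$). Since that fibre is a preorder, $h_1 = h_2$, and composing with the cartesian lifting gives $f_1 = u^{\S}_{Y'} \circ h_1 = u^{\S}_{Y'} \circ h_2 = f_2$.

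I expect the only delicate point to be the bookkeeping of this cartesian factorisation in the converse: one must check that the hypothesis $U f_i = u$ really presents each $f_i$ in the form demanded by the universal property of \autoref{def:prop-of-morphisms} (here with the choice $w = \id_X$), and that the mediating maps $h_i$ come out \emph{vertical}, so that they genuinely live in the single fibre $\ccat{E}_X$ and the preorder hypothesis is applicable. Everything else is routine.
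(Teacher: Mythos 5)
Your proof is correct, and its skeleton matches the paper's: the forward direction is the same observation that a faithful functor identifies parallel vertical morphisms, and the converse rests on factoring each \(f_i\) through a cartesian lifting and applying the preorder hypothesis to the vertical parts. Where you differ is in how that factorization is obtained and exploited. The paper does not rederive it: it cites two external results from Jacobs --- the vertical/cartesian factorization of an arbitrary morphism, and the uniqueness of cartesian liftings up to isomorphism in a fibre --- and factors the two morphisms through possibly \emph{different} cartesian morphisms, so it needs the second result to identify the cartesian parts; as written there, that step concludes an equality of the cartesian parts from a statement that is only up-to-isomorphism, which is a genuine weak point (and the vertical parts are misdescribed as endomorphisms). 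You instead instantiate the universal property of a \emph{single} cartesian lifting \(u^{\S}_{Y'}\) (with \(w = \id_X\)) for both \(f_1\) and \(f_2\), so the two vertical mediators \(h_1, h_2 : Y \to u^{*}Y'\) lie in the same hom-collection of the fibre \(\ccat{E}_X\), and the preorder hypothesis alone finishes the argument --- no appeal to uniqueness up to isomorphism, and no cited factorization lemma. This buys a self-contained and strictly tighter converse at essentially no extra cost; your closing caution about checking that the mediators are vertical is exactly the right point to verify, and your proof does verify it.
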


\begin{proof}
	Let \(U : \ccat{E} \to \ccat{B}\) be a fibration.
	\begin{description}[style=unboxed]
		\item[\(\Rightarrow\)] Let \(A \) in \( \ccat{B}\) and \(f, g \) in \( \ccat{E}_A\), both are vertical, i.e.\ \(U f = \id_A = U g\), but as \(U\) is faithful, \(f = g\), i.e.\ \(\ccat{E}_A\) is a preorder.
		\item[\(\Leftarrow\)] Assume \(f, g : P \to Q\) and \(Uf = Ug\).
		      As every morphism in \(\ccat{E}\) can be factorised as the composition of a vertical morphism and a cartesian lifting~\cite[1.1.3, p.29]{Jacobs1999}, i.e.\ \(f = h \circ f'\) and \(g = u \circ g'\).
		      Both \(h\) and \(u\) are endomorphisms in \(\ccat{E}_P\), herce, by partial ordering, \(h = u\).
		      Moreover, a cartesian morphism is unique up to isomorphism in a fibre~\cite[Proposition 1.1.4]{Jacobs1999}, so that \(f' = g'\), and \(f = g\).\qedhere
	\end{description}
\end{proof}

\chapter{On Slice Categories}
\label{sec:slice}

\section{Preliminaries on Slices}

We start by coming back to the definition of slice category (\autoref{def:construction_cat}) and introduce proper notations for it.

\begin{definition}[Slice category]
	\label{slice-def}
	Let \(\ccat{C}\) be a category, and \(A\) be an object of \(\ccat{C}\).
	The \emph{slice category \(\slic{C}{A}\)} is the category whose
	\begin{description}[style=unboxed]
		\item[objects] are pairs \((X, f_X)\) such that \(X\) is an object of \(\ccat{C}\) and \(f_X : X \to A\) is a morphism of \(\ccat{C}\),
		\item[morphisms] \(\is{h} : (X, f_X) \to (Y, f_Y)\) are morphism \(h : X \to Y\) in \(\ccat{C}\) such that \(f_Y \circ h = f_X\) in \(\ccat{C}\):

		      {\centering

		      \tikz{
			      \node (x) at (0, 0) {\(X\)};
			      \node (y) at (2, 0) {\(Y\)};
			      \node (a) at (1, -1) {\(A\)};
			      \draw [->] (x) to node[above]{\(h\)} (y);
			      \draw [->] (x) to node[left=0.2]{\(f_X\)} (a);
			      \draw [->] (y) to node[right=0.2]{\(f_Y\)} (a);
		      }

		      }
		\item[identity] on \((X, f_X)\)is \(\id_X\),
		\item[composition] is defined as \(\is{h} \circ_{\slic{C}{A}} \is{g} = \is{h \circ_{\ccat{C}} g}\).
	\end{description}
\end{definition}

For the following definition, we will use the definition and notation relative to the pullback introduced in \autoref{def-prop-cat}.

\begin{definition}[Pullback (or change-of-base) functor~{\cites[13.4.1]{Barr2012}[Proposition 5.10]{Awodey2010}}]
	\label{pullback-def}
	Let \(\ccat{C}\) be a category with all pullbacks, \(f : A \to B\) be a morphism in \(\ccat{C}\), we define the the \emph{pullback functor} \(\pull{f} : \slic{C}{B} \to \slic{C}{A}\) as
	\begin{description}[style=unboxed]
		\item[on objects] \(\pull{f}(C, f_C)\) is the pair \((fC, f^*f_C)\) given by the pullback of \(f\) along \(f_C\):

		      {\centering

		      \tikz{
			      \node (A) at (0, 0) {\(A\)};
			      \node (B) at (2, 0) {\(B\)};
			      \node (C) at (2, 1) {\(C\)};
			      \node (fC) at (0, 1) {\(fC\)};
			      \draw [->] (C) to node[right]{\(f_C\)} (B);
			      \draw [->] (A) to node[below]{\(f\)} (B);
			      \draw [->] (fC) to node[left]{\(f^* f_C\)} (A);
			      \draw [->] (fC) to node[above]{\(f_C^* f\)} (C);
		      }

		      }

		\item[on morphisms] \(\pull{f}\is{m}\), for \(\is{m} : (C, f_C) \to (D, f_D)\) a morphism in \(\slic{C}{B}\), is the unique morphism between \(\pull{f}(C, f_C)\) and \(\pull{f}(D, f_D)\) given by taking \((f^*C, f^*f_C, m \circ f_C^*f)\) as the \enquote{alternative} pullback of \(f\) and \(f_D\).
	\end{description}
	More precisely, we have:

	{\centering

	\begin{tikzpicture}
		\node (fc) at (-1, 5) {\(f^*C\)};
		\node (fd) at (1, 3.5) {\(f^*D\)};
		\node (a) at (0, 0) {\(A\)};
		\node (c) at (4, 5) {\(C\)};
		\node (d) at (6, 3.5) {\(D\)};
		\node (b) at (5, 0) {\(B\)};
		\draw [->] (fc) to node[left]{\(f^*f_C\)} (a);
		\draw [->] (fd) to node[right]{\(f^*f_D\)} (a);
		\draw [->] (c) to node[left]{\(f_C\)} (b);
		\draw [->] (d) to node[right]{\(f_D\)} (b);
		\draw [->] (c) to node[above right]{\(m\)} (d);
		\draw [->] (a) to node[below]{\(f\)} (b);
		\draw [->] (fc) to node[above]{\(f_C^*f\)} (c);
		\draw [->] (fd) to node[above]{\(f_D^*f\)} (d);
		\draw [->, dashed] (fc) to node[above right]{\(\pull{f}m\)} (fd);
	\end{tikzpicture}

	}
	Then, since \(f^*f_D\) is the pullback of \(f\) and \(f_D\), and since \(f \circ f^*f_C = f_D \circ (m \circ f_C^*f)\) (because \(f_D \circ m = f_C\), since \(m\) is a morphism in \(\slic{C}{B}\), and \(f \circ f^*f_C = f_C \circ f_C^*f\), by construction), there exists a unique \(\pull{f}m\) such that \(f^*f_C = f^*f_D\circ \pull{f}m\), and so \(\pull{f}m\) is a morphism in \(\slic{C}{A}\).
\end{definition}

In the future, we will prefer the notation \(A \times_{\ccat{C}}^B D\) over \(f^*D\).

\begin{remark}
	This pullback functor \(\pull{f} : \slic{C}{B} \to \slic{C}{A} \) is not to be confused with the reindexing functor \(f^* : \ccat{E}_{UP} \to \ccat{E}_{X}\) defined thanks to a cloven fibration \(U: \ccat{E}\to \ccat{B}\) in \autoref{def:re-indexing}, even if they are sometimes both denoted with \(f^*\)~\cite%
	[Example~7.29]{Awodey2011}.%
	The fact that the same notation is used for both probably comes from the fact that if \(U\) %
	is the codomain functor \(\codom : \ccat{B}^{\to} \to \ccat{B}\)%
	, then \(\ccat{C}_Z \cong \slic{C}{Z}\) for all \(Z\) in \(\ccat{C}\)~{\cite[28, Ex. 1.4.2]{Jacobs1999}}, and the assimilation is grounded.
\end{remark}

\begin{definition}[Composition functor~{\cite[13.4.2]{Barr2012}}]
	\label{composition-def}
	Let \(f : A \to B\) be a morphism in \(\ccat{C}\),
	we define \emph{the composition functor} \(\comp{f} : \slic{C}{A} \to \slic{C}{B}\) to be
	\begin{description}[style=unboxed]
		\item[on objects] \(\comp{f}(C, f_C)\) is \((C, f \circ f_C)\),
		\item[on morphisms] \(\comp{f}\is{k}\), for \(\is{k} : (C, f_C) \to (D, f_D)\) a morphism in \(\slic{C}{A}\), is \(\is{k}\) itself:

		      {\centering

		      \begin{tikzpicture}
			      \node (a) at (0, 0) {\(A\)};
			      \node (b) at (4, 0) {\(B\)};
			      \node (c) at (0, 3) {\(C\)};
			      \node (d) at (4, 3) {\(D\)};
			      \draw [->] (a) to node[below]{\(f\)} (b);
			      \draw [->] (c) to node[left]{\(f_C\)} (a);
			      \draw [->] (d) to node[above=0.1, pos=0.3]{\(f_D\)} (a);
			      \draw [->] (d) to node[right]{\(f \circ f_D \)} (b);
			      \draw [->] (c) to node[above=0.1, pos=0.3]{\(f \circ f_C\)} (b);
			      \draw [->] (c) to node[above]{\(k\)} (d);
		      \end{tikzpicture}

		      }

		      We can easily make sure that \(\is{k}\) is a morphism in \(\slic{C}{B}\): \(f \circ f_D \circ k = f \circ f_C\) holds since \(k\) is a morphism in \(\slic{C}{A}\).
	\end{description}
\end{definition}

Given \(f : B \to A\), \(\pull{f}\) and \(\comp{f}\) are actually adjoints, and it can be the case that \(\pull{f}\) also has a right adjoint:

\begin{definition}[Adjoints of \(\pull{f}\)~{\cites[Definition 9.19]{Awodey2011}[Corollary A.1.5.3]{Johnstone2002}}]
	\label{def:adjunction-to-f}
	Let \(f : B \to A\), if \(\pull{f}\) has a right adjoint, then we write it \(\pullra{f}\), say that \(f\) is \emph{exponentiable}, and we have:

	\[ \comp{f} \dashv \pull{f} \dashv \pullra{f}\]

	I.e.,

	{\centering

			\tikz{
				\node (CA) at (0,0) {\(\slic{C}{A}\)};
				\node (CB) at (4,0) {\(\slic{C}{B}\)};
				\draw [->] (CA) to node[above]{\(\pull{f}\)} (CB);
				\draw [->] (CB) to [bend left] node[below]{\(\comp{f}\)} (CA);
				\draw [->] (CB) to [bend right] node[above]{\(\pullra{f}\)} (CA);
			}

		}
	In particular, for the adjunction \(\comp{f} \dashv \pull{f}\), we have the unit \(\unit_{\comp{f}} : \id_{\slic{C}{A}} \natt \pull{f}\comp{f}\) and the counit \(\counit_{\comp{f}} : \comp{f}\pull{f} \natt \id_{\slic{C}{B}} \) such that for all \((C, f_C)\) in \(\slic{C}{B}\), \((D, f_D)\) in \(\slic{C}{A}\),

	\begin{align}
		\forall \is{k} : (C, f_C) \to \pull{f}(D, f_D), & \exists ! \is{l} : \comp{f} (C, f_C) \to (D, f_D) \notag                                        \\
		                                                & \text{ s.t. } \is{k} = \pull{f}\is{l} \circ (\unit_{\comp{f}})_{(C, f_C)} \label{unit-comp}     \\
		\forall \is{k} : \comp{f}(C, f_C) \to (D, f_D), & \exists ! \is{l} : (C, f_C) \to \pull{f}(D, f_D)\notag                                          \\
		                                                & \text{ s.t. } \is{k} = (\counit_{\comp{f}})_{(D, f_D)} \circ \comp{f}\is{l} \label{counit-comp}
	\end{align}

	And, for the adjunction \(\pull{f} \dashv \pullra{f}\), the unit \(\unit_{\pullra{f}} : \id_{\slic{C}{B}} \natt \pullra{f}\pull{f}\) and the counit \(\counit_{\pullra{f}} : \pull{f}\pullra{f} \natt \id_{\slic{C}{A}} \) are such that
	\begin{align}
		\forall \is{k} : (D, f_D) \to \pullra{f}(C, f_C), & \exists ! l : \pull{f} (D, f_D) \to (C, f_C) \notag                                                 \\
		                                                  &
		\text{ s.t. } \is{k} = \pullra{f}\is{l} \circ (\unit_{\pullra{f}})_{(D, f_D)} \label{unit-pullra}                                                       \\
		\forall \is{k} : \pull{f} (D, f_D) \to (C, f_C),  & \exists ! l : (D, f_D) \to \pullra{f}(C, f_C)\notag                                                 \\
		                                                  & \text{ s.t. } \is{k} = (\counit_{\pullra{f}})_{(C, f_C)} \circ \pull{f}\is{l} \label{counit-pullra}
	\end{align}
\end{definition}

In the following (\autoref{sub:slice-term}, \autoref{sub:slice-product} and \autoref{sub:slice-exponent}), we'll prove that, under certain conditions, the slice category \(\slic{C}{A}\) can be endowed with a cartesian structure~\cite[Proposition 9.20]{Awodey2010}, with \((A, \id_A)\) being the terminal object, and, for \((X_1, f_{X_1})\) and \((X_2, f_{X_2})\) two objects,

\begin{align}
	(X_1, f_{X_1}) \times (X_2, f_{X_2})      & =_{\text{def}} \comp{f_{X_1}}(\pull{f_{X_1}}(X_2, f_{X_2})) \label{eq-prod-1}   \\
	\shortintertext{or, equivalently}
	(X_1, f_{X_1}) \times (X_2, f_{X_2})      & =_{\text{def}} \comp{f_{X_2}}(\pull{f_{X_2}}(X_1, f_{X_1})) ) \label{eq-prod-2} \\
	(X_1, f_{X_1}) \Rightarrow (X_2, f_{X_2}) & =_{\text{def}} \pullra{f_{X_1}}(\pull{f_{X_1}}(X_2, f_{X_2})) ) \label{eq-expo}
\end{align}
with
\begin{align}
	\is{\eval}_{(X_1, f_{X_1}), (X_2, f_{X_2})} & : ((X_1, f_{X_1}) \Rightarrow (X_2, f_{X_2})) \times (X_1, f_{X_1}) \to (X_2, f_{X_2}) \notag                                                                \\
	                                            & =_{\text{def}} (\counit_{\comp{f_{X_1}}})_{(X_2, f_{X_2})} \circ \comp{f_{X_1}}((\counit_{\pullra{f_{X_1}}})_{\pull{f_{X_1}}(X_2, f_{X_2})}) \label{eq-eval}
\end{align}

To have a cartesian closed category in every slice, we will have to suppose the initial category \(\ccat{C}\) is \emph{locally cartesian closed} (LCC).
LCC categories are of interest on their own, because e.g. of the link they have to dependent type~\cite{Seely1984}, but whenever they have a terminal object seems to vary with the author\footnote{Compare \enquote{By convention, a locally cartesian closed category is assumed to have a terminal object, so that it is in particular cartesian closed.}~\cite[48]{Johnstone2002} with \enquote{A locally cartesian category which has a terminal object is cartesian closed.}~\cites[381--382]{Barr2012}[Proposition 13.4.6]{Barr2012}. Steve Awodey~\cite[Remark 9.21]{Awodey2010} writes it explicitly: \enquote{The reader should be aware that some authors do not require the existence of a terminal object in the definition of a locally cartesian closed category.}}.

\begin{definition}[Locally Cartesian Closed]
	\label{def:lcc}
	A category \(\ccat{C}\) is \emph{locally cartesian closed} if, equivalently,
	\begin{enumerate}
		\item it has pullbacks and every morphism is exponentiable \cite%
		      [13]{Johnstone2002}
		\item each slice category \(\slic{C}{A}\) is cartesian closed \cites[Corollary 1.5.3]{Johnstone2002}[81]{Jacobs1999}
		\item \(\ccat{C}\) has a terminal object, and for all morphism \(f : C \to D\) in \(\ccat{C}\), the composition functor (\autoref{composition-def}) \(\Sigma_f : \slic{C}{C} \to \slic{C}{D}\) has a right adjoint \(\pull{f}\), which in turns has a right adjoint \(\pullra{f}\) (\autoref{def:adjunction-to-f}).
	\end{enumerate}
\end{definition}

\section{Cartesian Structure}
\label{sec:cartesian-structure-in-slice}

\subsection{Terminal Object}
\label{sub:slice-term}

\begin{lemma}[Terminal Object]
	For all \(\ccat{C}\) and \(A\) an object of \(\ccat{C}\), \(\slic{C}{A}\) has terminal object.
\end{lemma}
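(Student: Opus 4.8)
The plan is to exhibit $(A, \id_A)$ as the terminal object, exactly as announced in the preceding discussion. First note that $(A, \id_A)$ is a legitimate object of $\slic{C}{A}$, since $\id_A : A \to A$ is a morphism of $\ccat{C}$ with codomain $A$. The verification then splits into the two obligations in the definition of a terminal object: for every object of $\slic{C}{A}$ there must exist a morphism into $(A, \id_A)$, and that morphism must be unique.

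For existence, I would take an arbitrary object $(X, f_X)$ and propose $\is{f_X}$, i.e.\ the underlying morphism $f_X : X \to A$ itself, as the candidate map $(X, f_X) \to (A, \id_A)$. By \autoref{slice-def} this is a morphism of the slice category precisely when $\id_A \circ f_X = f_X$, which holds by the identity law of \autoref{def:cat}.

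For uniqueness, I would let $\is{h} : (X, f_X) \to (A, \id_A)$ be any slice morphism, with underlying $h : X \to A$ in $\ccat{C}$. The defining commutation condition of \autoref{slice-def} forces $\id_A \circ h = f_X$, whence $h = f_X$, again by the identity law. Thus there is exactly one morphism from each object into $(A, \id_A)$, which is therefore terminal. There is essentially no obstacle here: the whole argument is a direct unfolding of the definition of slice morphism, the single decisive observation being that the constraint $\id_A \circ h = f_X$ pins $h$ down completely and so delivers existence and uniqueness at once.
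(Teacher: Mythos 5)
Your proposal is correct and follows essentially the same route as the paper's own proof: exhibit \((A, \id_A)\) and observe that the slice condition \(\id_A \circ h = f_X\) forces \(h = f_X\), yielding existence and uniqueness simultaneously. If anything, your write-up is slightly more careful than the paper's, since you explicitly verify that \(f_X\) itself is a valid slice morphism before arguing uniqueness.
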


\begin{proof}
	We prove that \((A, \id_A)\) is a terminal object in \(\slic{C}{A}\): let \((X, f_X)\) be an object in \(\slic{C}{A}\), we want to construct a unique \(h : (X, f_X) \to (A, \id_A)\) such that \(\id_a \circ h = f_X\).
	But \(\id_a \circ h = f_X\) implies that \(h = f_X\), and it is unique, since any other morphism \(h'\) would be such that \(\id_a \circ h' = h' = f_X = h\).
\end{proof}

Remark that the unique morphism between an object and the terminal object is given by the object itself.

\subsection{Products}
\label{sub:slice-product}

\begin{remark}[On products]
	The \enquote{spontaneous} way to define a product in \(\slic{C}{A}\) from the product in \(\ccat{C}\) does not work: suppose we define \((X, f_X) \times_{\slic{C}{A}} (Y, f_Y)\) to be \((x \times_{\ccat{C}} y), (f_X \times_{\ccat{C}} f_Y)\), as \(f_X \times_{\ccat{E}} f_Y\) is a morphism into \(A \times_{\ccat{C}} A\), it is not a morphism in \(\slic{C}{A}\).
\end{remark}

\begin{lemma}[Products]
	\label{product-def}
	If \(\ccat{C}\) has pullbacks and \(A\) is an object of \(\ccat{C}\), then \(\slic{C}{A}\) has product.
\end{lemma}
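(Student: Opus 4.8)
The plan is to realize the product in $\slic{C}{A}$ by the pullback in $\ccat{C}$, exactly as announced in \eqref{eq-prod-1}. Given two objects $(X, f_X)$ and $(Y, f_Y)$, I would form in $\ccat{C}$ the pullback of $f_X$ and $f_Y$, call its apex $P = X \times^A Y$, and write $p_1 : P \to X$ and $p_2 : P \to Y$ for the two projections, so that $f_X \circ p_1 = f_Y \circ p_2$ by the pullback square (\autoref{def-prop-cat}). Unwinding \autoref{pullback-def} and \autoref{composition-def}, this is precisely the object $\comp{f_X}(\pull{f_X}(Y, f_Y))$: the functor $\pull{f_X}$ produces $(P, p_1)$ in $\slic{C}{X}$, and $\comp{f_X}$ post-composes with $f_X$ to give the structure map $f_P = f_X \circ p_1 = f_Y \circ p_2 : P \to A$. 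Thus my candidate for $(X, f_X) \times (Y, f_Y)$ is $(P, f_P)$, and I expect the symmetric recipe $\comp{f_Y}(\pull{f_Y}(X, f_X))$ of \eqref{eq-prod-2} to give the same apex up to the canonical isomorphism of pullbacks.

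Next I would check that the two projections are genuine morphisms of $\slic{C}{A}$. By \autoref{slice-def}, $\is{p_1} : (P, f_P) \to (X, f_X)$ is legitimate iff $f_X \circ p_1 = f_P$, which holds by the very definition of $f_P$; similarly $\is{p_2} : (P, f_P) \to (Y, f_Y)$ requires $f_Y \circ p_2 = f_P$, which holds because the pullback square commutes. So both $\is{p_1}$ and $\is{p_2}$ are well-defined slice morphisms.

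Finally I would verify the universal property. Given any $(D, f_D)$ with slice morphisms $\is{g_1} : (D, f_D) \to (X, f_X)$ and $\is{g_2} : (D, f_D) \to (Y, f_Y)$, unwinding \autoref{slice-def} these are $\ccat{C}$-morphisms $g_1 : D \to X$ and $g_2 : D \to Y$ with $f_X \circ g_1 = f_D = f_Y \circ g_2$; in particular $f_X \circ g_1 = f_Y \circ g_2$, which is exactly the compatibility needed to invoke the universal property of the pullback $P$. This yields a unique $v : D \to P$ with $p_1 \circ v = g_1$ and $p_2 \circ v = g_2$. I would then confirm $\is{v}$ lands in the slice, i.e. $f_P \circ v = f_D$, by computing $f_P \circ v = f_X \circ p_1 \circ v = f_X \circ g_1 = f_D$. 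The equations $p_i \circ v = g_i$ read precisely as $\is{p_i} \circ_{\slic{C}{A}} \is{v} = \is{g_i}$, and uniqueness of $\is{v}$ follows from uniqueness of $v$ for the pullback, since any competing slice mediator is in particular a $\ccat{C}$-morphism satisfying the same two equations.

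The whole argument is essentially a dictionary between the universal property of the pullback and that of the product in the slice, so I do not anticipate a serious obstacle. The only points demanding genuine care are the two bookkeeping checks that keep us inside $\slic{C}{A}$: that the mediating arrow $v$ is a slice morphism — which is exactly where the defining identity $f_P = f_X \circ p_1$ is used — and that the projections $\is{p_1}, \is{p_2}$ are themselves slice morphisms.
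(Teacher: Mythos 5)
Your proposal is correct and follows essentially the same route as the paper: the product of $(X, f_X)$ and $(Y, f_Y)$ is the pullback apex equipped with $f_X \circ p_1$ (i.e.\ $\comp{f_X}(\pull{f_X}(Y, f_Y))$), the projections are checked to be slice morphisms, and the universal property is transferred from the pullback by verifying that the mediating arrow $v$ satisfies $f_P \circ v = f_D$. If anything, your treatment of uniqueness (any competing slice mediator is in particular a $\ccat{C}$-morphism satisfying the same two equations) is slightly more explicit than the paper's, which leaves that point implicit.
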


\begin{figure}
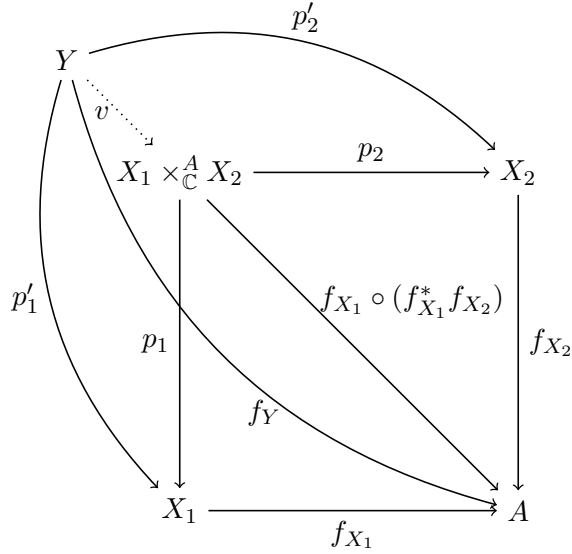

	{\centering

		\tikz{
			\node (y) at (0, 6) {\(Y\)};
			\node (pull) at (1.5, 4.5) {\(X_1 \times_{\ccat{C}}^A X_2\)};
			\node (x1) at (1.5, 0) {\(X_1\)};
			\node (x2) at (6, 4.5) {\(X_2\)};
			\node (a) at (6, 0) {\(A\)};
			\draw [->] (pull) to node[pos=0.35, right]{\(f_{X_1} \circ (f_{X_1}^*f_{X_2})\)} (a);
			\draw[->] (pull) to node[above]{\(p_2\)} (x2);
			\draw[->] (pull) to node[left]{\(p_1\)} (x1);
			\draw[->] (x1) to node[below]{\(f_{X_1}\)} (a);
			\draw[->] (x2) to node[right]{\(f_{X_2}\)} (a);
			\draw[->] (y) to [bend left] node[above]{\(p_2'\)} (x2);
			\draw[->] (y) to [bend right] node[left]{\(p_1'\)} (x1);
			\draw[->] (y) to [bend right] node[left, pos=0.65]{\(f_Y\)} (a);
			\draw[->, dotted] (y) to node[left]{\(v\)} (pull);
		}

	}
	\caption{Situation in the proof of \autoref{product-def}}
	\label{fig:product-def}
\end{figure}

\begin{proof}
	Let \((X_1, f_{X_1})\) and \((X_2, f_{X_2})\) be objects of \(\slic{C}{A}\), we write \(((X_1 \times_{\ccat{C}}^A X_2), f^*_{X_1} f_{X_2} , p_2)\) the pullback of \(f_{X_2}\) along \(f_{X_1}\) in \(\ccat{C}\) and define their product in \(\slic{C}{A}\) to be
	\((\comp{f_{X_1}}(\pull{f_{X_1}}(X_2, f_{X_2}))), f_{X_1}^*f_{X_2} , p_2)\), i.e., \((((X_1 \times_{\ccat{C}}^A X_2), f_{X_1} \circ (f_{X_1}^*f_{X_2})), f_{X_1}^*f_{X_2} , p_2)\). %
	In the following, let \(p_1 = f_{X_1}^*f_{X_2}\).

	Checking that \(((X_1 \times_{\ccat{C}}^A X_2),f_{X_1} \circ (f_{X_1}^*f_{X_2}))\) is an object in \(\slic{C}{A}\), and that \(p_1\) and \(p_2\) are morphisms in \(\slic{C}{A}\) is straightformard, and can be read from \autoref{fig:product-def}.

	For the universal property, let \((Y, f_Y)\) be an object of \(\slic{C}{A}\) and, for \(i\in \{1, 2\}\), \(p'_i : Y \to X_i\) be a morphism of \(\slic{C}{A}\), i.e.\ such that \(f_Y = f_{X_i} \circ p_i'\).
	We produce the unique \(v\) of \(\slic{C}{A}\) such that \(p_i' = p_i \circ v\) and \(f_Y = f_{X_i} \circ p_i \circ v\).
	Since \(((X_1 \times_{\ccat{C}}^A X_2), p_1, p_2)\) is the pullback of \(X_1\) and \(X_2\), we know there is a unique \(v : Y \to X_1 \times_{\ccat{C}}^A X_2\) such that \(p_i' = p_i \circ v\).
	We are left to prove that \(f_Y = f_{X_i} \circ p_i \circ v\):
	\begin{align}
		f_Y & = f_{X_i} \circ p_i' \tag{Since \(p_i'\) is a morphism in \(\slic{C}{A}\)} \\
		    & = f_{X_i} \circ p_i \circ v \tag{By definition of \(v\)}
	\end{align}\qedhere
\end{proof}

\begin{remark}[{\enquote{Altenate} product}]
	Remark that, by the universal property of the pullback, \(f_{X_1} \circ (f_{X_1}^*f_{X_2}) \cong f_{X_2} \circ (f_{X_2}^*f_{X_1})\), so that we could equivalently take the product of \((X_1, f_{X_1})\) and \((X_2, f_{X_2})\) to be \((((X_1 \times_{\ccat{C}}^A X_2),f_{X_2} \circ (f_{X_2}^*f_{X_1})), f_{X_2}^*f_{X_1} , p_2)\).

	This justifies the two presentations given in \autoref{eq-prod-1} and \autoref{eq-prod-2} and makes the product in slice categories symmetric \enquote{by construction}.
\end{remark}

\begin{remark}[On the product of morphisms]
	Given \(\is{f} : (X_1, f_{X_1}) \to (X_2, f_{X_2})\) and \(\is{g}: (Y_1, f_{Y_1}) \to (Y_2, f_{Y_2})\) two morphims in \(\slic{C}{A}\), their product \(\is{f} \times \is{g} : (X_1, f_{X_1}) \times (Y_1, f_{Y_1}) \to (X_2, f_{X_2}) \times (Y_2, f_{Y_2})\) is the only \(v\) given by the universal property of the pullback of \(f_{Y_2}\) along \(f_{X_2}\) below:

	{\centering

	\tikz{
	\node (y) at (0, 6) {\(X_1 \times_{\ccat{C}}^A Y_1\)};
	\node (pull) at (1.5, 4.5) {\(X_2 \times_{\ccat{C}}^A Y_2\)};
	\node (X2) at (1.5, 2) {\(X_2\)};
	\node (Y2) at (5, 4.5) {\(Y_2\)};
	\node (X1) at (0, 0) {\(X_1\)};
	\node (Y1) at (7, 6) {\(Y_1\)};
	\node (a) at (7, 0) {\(A\)};
	\draw[->] (pull) to node[above]{\(f_{Y_2}^*f_{X_2}\)} (Y2);
	\draw[->] (pull) to node[left]{\(f_{X_2}^*f_{Y_2}\)} (X2);
	\draw[->] (X2) to node[below]{\(f_{X_2}\)} (a);
	\draw[->] (Y2) to node[right]{\(f_{Y_2}\)} (a);
	\draw[->] (X1) to node[below]{\(f_{X_1}\)} (a);
	\draw[->] (Y1) to node[right]{\(f_{Y_1}\)} (a);
	\draw[->] (y) to %
	node[above]{\(f_{Y_1}^*f_{X_1}\)} (Y1);
	\draw[->] (y) to %
	node[left]{\(f_{X_1}^*f_{Y_1}\)} (X1);
	\draw[->, dotted] (y) to node[left]{\(v\)} (pull);
	\draw[->] (X1) to node[right]{\(f\)} (X2);
	\draw[->] (Y1) to node[left]{\(g\)} (Y2);
	}

	}

	Notice first that \(f_{X_1} = f_{X_2} \circ f\) and \(f_{Y_1} = f_{Y_2} \circ g\) since \(\is{f}\) and \(\is{g}\) are morphisms in \(\slic{C}{A}\).
	Hence, \(f_{X_2} \circ f \circ f^*_{X_1}f_{Y_1} = f_{Y_2} \circ g \circ f^*_{Y_1}f_{X_1}\), and by the universal property of the pullback of \(f_{Y_2}\) along \(f_{X_2}\), there exists a unique \(v\) such that \(f^*_{X_2}f_{Y_2} \circ v = f \circ f^*_{X_1}f_{Y_1}\) and \(f^*_{Y_2}f_{X_2} \circ v = g \circ f^*_{Y_1}f_{X_1}\).
	Hence, it follows that \(v\) is a morphism in \(\slic{C}{A}\), and we write it \(\is{f} \times \is{g}\).
\end{remark}

\subsection{Exponents}
\label{sub:slice-exponent}

\begin{lemma}[Exponents]
	\label{exponents-def}
	If for all \(f_{X_1} : X_1 \to A\), \(\pull{f_{X_1}}\) has a right adjoint, then \(\slic{C}{A}\) has exponents.
\end{lemma}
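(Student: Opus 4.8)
The plan is to read off the exponential structure of \(\slic{C}{A}\) directly from the threefold adjunction \(\comp{f_{X_1}} \dashv \pull{f_{X_1}} \dashv \pullra{f_{X_1}}\) of \autoref{def:adjunction-to-f}, using the standard principle that an object \((X_1, f_{X_1})\) is exponentiable exactly when the functor \(- \times (X_1, f_{X_1})\) has a right adjoint, in which case the exponent is the value of that right adjoint and evaluation is its counit. First I would fix an arbitrary object \((X_1, f_{X_1})\) of \(\slic{C}{A}\). Since \(\ccat{C}\) has pullbacks (as already required in \autoref{product-def}), the endofunctor \(- \times (X_1, f_{X_1})\) on \(\slic{C}{A}\) is defined; and reading the product through the \enquote{alternate} presentation \eqref{eq-prod-2}, in which the \emph{second} factor supplies the functors, it is precisely the composite \(\comp{f_{X_1}} \circ \pull{f_{X_1}} : \slic{C}{A} \to \slic{C}{A}\). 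On objects this returns the object \(\comp{f_{X_1}}(\pull{f_{X_1}}(C, f_C))\) demanded by \eqref{eq-prod-2}, and on a morphism \(\is{u}\) it agrees with the product-of-morphisms \(\is{u} \times \id_{(X_1, f_{X_1})}\) built in the preceding remark. Dually, I define \((X_1, f_{X_1}) \Rightarrow -\) to be \(\pullra{f_{X_1}} \circ \pull{f_{X_1}}\), so that its value at \((X_2, f_{X_2})\) is the object \(\pullra{f_{X_1}}(\pull{f_{X_1}}(X_2, f_{X_2}))\) prescribed by \eqref{eq-expo}.

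The heart of the proof is then that adjunctions compose. Writing \(\comp{f_{X_1}} \circ \pull{f_{X_1}}\) as \(F' \circ F\) with \(F = \pull{f_{X_1}}\) and \(F' = \comp{f_{X_1}}\), the right adjoint of \(F' \circ F\) is \(G \circ G'\), where \(G = \pullra{f_{X_1}}\) is the right adjoint of \(F\) (this is the hypothesis of the lemma) and \(G' = \pull{f_{X_1}}\) is the right adjoint of \(F'\) (the adjunction \(\comp{f_{X_1}} \dashv \pull{f_{X_1}}\)). Hence \(\comp{f_{X_1}} \circ \pull{f_{X_1}} \dashv \pullra{f_{X_1}} \circ \pull{f_{X_1}}\), that is \((- \times (X_1, f_{X_1})) \dashv ((X_1, f_{X_1}) \Rightarrow -)\). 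Transposing across this adjunction produces, for every \((C, f_C)\) and every \(\is{g} : (C, f_C) \times (X_1, f_{X_1}) \to (X_2, f_{X_2})\), a unique \(\is{u} : (C, f_C) \to (X_1, f_{X_1}) \Rightarrow (X_2, f_{X_2})\) whose image under \(- \times (X_1, f_{X_1})\), followed by the counit, returns \(\is{g}\); this is verbatim the universal property of an exponent required in \autoref{def-prop-cat}.

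It remains to identify the evaluation map \(\is{\eval}\) of \eqref{eq-eval} with the counit of this composite adjunction at \((X_2, f_{X_2})\). Here I would invoke the usual formula for the counit of a composite \(F'F \dashv GG'\), which at an object \(Y\) is \((\counit_{\comp{f_{X_1}}})_Y \circ \comp{f_{X_1}}((\counit_{\pullra{f_{X_1}}})_{\pull{f_{X_1}} Y})\); evaluating at \(Y = (X_2, f_{X_2})\) reproduces \eqref{eq-eval} exactly. I expect the only real friction to be bookkeeping rather than anything conceptual: one must take care to expand the product via \eqref{eq-prod-2} and not \eqref{eq-prod-1}, so that the relevant functors are built from \(f_{X_1}\) and not from \(f_C\), and one must confirm that \(\comp{f_{X_1}} \circ \pull{f_{X_1}}\) really acts on morphisms as \(- \times \id_{(X_1, f_{X_1})}\) (using that \(\comp{f_{X_1}}\) is the identity on underlying morphisms and that \(\pull{f_{X_1}}\) acts by the induced map on pullbacks of \autoref{pullback-def}), so that the counit genuinely plays the role of evaluation against \(\is{u} \times \id_{(X_1, f_{X_1})}\) in the defining diagram.
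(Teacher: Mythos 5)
Your proposal is correct and is essentially the paper's own proof: the paper defines the exponent as \(\pullra{f_{X_1}}(\pull{f_{X_1}}(X_2, f_{X_2}))\) and the evaluation map by the very same counit formula, and its verification of the universal property is exactly the composite-adjunction argument unfolded by hand, transposing once through \(\counit_{\comp{f_{X_1}}}\) (via \autoref{counit-comp}) and once through \(\counit_{\pullra{f_{X_1}}}\) (via \autoref{counit-pullra}) instead of citing the general fact that adjunctions compose. The friction point you flag is also precisely the one the paper must address: its \enquote{close inspection} step checks that \(\comp{f_{X_1}}(\pull{f_{X_1}}(\is{l_2}))\) and \(\is{l_2} \times \is{\id_{(X_1, f_{X_1})}}\) are the same morphism, so that the counit genuinely plays the role of evaluation.
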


\begin{proof}
	Let \((X_2, f_{X_2})\) be an object of \(\slic{C}{A}\), we define
	\begin{itemize}
		\item \((X_1, f_{X_1}) \Rightarrow (X_2, f_{X_2})\) to be \(\pullra{f_{X_1}}(\pull{f_{X_1}}(X_2, f_{X_2}))\),
		\item the evaluation map
		      \[\is{\eval}_{(X_1, f_{X_1}), (X_2, f_{X_2})} : ((X_1, f_{X_1}) \Rightarrow (X_2, f_{X_2})) \times (X_1, f_{X_1}) \to (X_2, f_{X_2})\]
		      to be
		      \((\counit_{\comp{f_{X_1}}})_{(X_2, f_{X_2})} \circ \comp{f_{X_1}}((\counit_{\pullra{f_{X_1}}})_{\pull{f_{X_1}}(X_2, f_{X_2})})\)%
		      , where \((X_2, f_{X_2})\) (resp.\ \(\pull{f_{X_1}}(X_2, f_{X_2})\)) is the component at which the natural transformation \(\counit_{\comp{f_{X_1}}}\) (resp.\ \(\counit_{\pullra{f_{X_1}}}\)) is taken.
		\item and for all \((Z, f_Z)\)\ and \(\is{h} : (Z, f_Z) \times (X_1, f_{X_1}) \to (X_2, f_{X_2})\), the definition of \(\is{\lambda g} : (Z, f_Z) \to (X_1, f_{X_1}) \Rightarrow (X_2, f_{X_2})\) will be given below, using the properties given in \autoref{counit-comp} and \autoref{counit-pullra} of the co-units of the adjunctions given in \autoref{def:adjunction-to-f}.
	\end{itemize}

	We first check that this object and this morphism belong to \(\slic{C}{A}\):
	\begin{itemize}
		\item Since \(f_{X_1} : X_1 \to A\), \(\pull{f_{X_1}} : \slic{C}{A} \to \slic{C}{X_1}\) and \(\pullra{f_{X_1}} : \slic{C}{X_1} \to \slic{C}{A}\), we have that \((X_1, f_{X_1}) \Rightarrow (X_2, f_{X_2})\) is an object in \(\slic{C}{A}\).
		\item First, note that, by expanding the definitions of product (\autoref{product-def}) and exponent in the slice category,
		      \[((X_1, f_{X_1}) \Rightarrow (X_2, f_{X_2})) \times (X_1, f_{X_1})\]
		      is
		      \[\comp{f_{X_1}}(\pull{f_{X_1}}(\pullra{f_{X_1}}(\pull{f_{X_1}}(X_2, f_{X_2})))\]
		      and we can check that this is indeed the domain of the evaluation map.
		      Secondly, this evaluation map
		      \[(\counit_{\comp{f_{X_1}}})_{(X_2, f_{X_2})} \circ \comp{f_{X_1}}((\counit_{\pullra{f_{X_1}}})_{\pull{f_{X_1}}(X_2, f_{X_2})})\]
		      is indeed in \(\slic{C}{A}\):
		      \begin{itemize}
			      \item \(\pull{f_{X_1}}(X_2, f_{X_2})\) is in \(\slic{C}{X_1}\),
			      \item hence, \((\counit_{\pullra{f_{X_1}}})_{\pull{f_{X_1}}(X_2, f_{X_2})}\) is a morphism in \(\slic{C}{X_1}\),
			      \item and since \(\comp{f_{X_1}} : \slic{C}{X_1} \to \slic{C}{A}\), \(\comp{f_{X_1}}((\counit_{\pullra{f_{X_1}}})_{\pull{f_{X_1}}(X_2, f_{X_2})})\) is in \(\slic{C}{A}\).
			      \item for \((\counit_{\comp{f_{X_1}}})_{(X_2, f_{X_2})}\), it suffices to check that \((X_2, f_{X_2})\) is an object in \(\slic{C}{A}\), and hence that \((\counit_{\comp{f_{X_1}}})_{(X_2, f_{X_2})}\) is indeed in \(\slic{C}{A}\).
		      \end{itemize}
	\end{itemize}

	For the universal property of the evaluation map: suppose there exists \((Z, f_Z)\)\ and \(\is{h} : (Z, f_Z) \times (X_1, f_{X_1}) \to (X_2, f_{X_2})\).
	Since
	\[(Z, f_Z) \times (X_1, f_{X_1}) = \comp{f_{X_1}}(\pull{f_{X_1}}(Z, f_Z))\]
	by \autoref{counit-comp}, there exists a unique \(\is{l_1} : \pull{f_{X_1}}(Z, f_Z) \to \pull{f_{X_1}}(X_2, f_{X_2})\) such that
	\[\is{h} = (\counit_{\comp{f}})_{(X_2, f_{X_2})} \circ \comp{f_{X_1}}(\is{l_1})\]
	But then, by \autoref{counit-pullra}, there exists a unique \(\is{l_2} : (Z, f_Z) \to \pullra{f_{X_1}}(\pull{f_{X_1}}(X_2, f_{X_2}))\) such that
	\[\is{l_1} = (\counit_{\pullra{f_{X_1}}})_{(\pull{f_{X_1}}(X_2, f_{X_2}))} \circ \pull{f_{X_1}}(\is{l_2})\]
	Putting it all together, and leaving the subscripts aside, we have:
	\begin{align*}
		\is{h} & = \counit_{\comp{f_{X_1}}} \circ \comp{f_{X_1}}(\counit_{\pullra{f_{X_1}}} \circ \pull{f_{X_1}}(\is{l_2}))                                                             \\
		       & = \counit_{\comp{f_{X_1}}} \circ \comp{f_{X_1}}(\counit_{\pullra{f_{X_1}}}) \circ \comp{f_{X_1}}(\pull{f_{X_1}}(\is{l_2})) \tag{Since \(\comp{f_{X_1}}\) is a functor} \\
		       & = \is{\eval}_{(X_1, f_{X_1}), (X_2, f_{X_2})} \circ \comp{f_{X_1}}(\pull{f_{X_1}}(\is{l_2})) \tag{By definition of \(\is{\eval}\)}                                     \\
		\shortintertext{A close inspection reveals that \(\comp{f_{X_1}}(\pull{f_{X_1}}(\is{l_2}))\) and \(\is{l_2} \times \is{\id_{(X_1, f_{X_1})}}\) are actually the same morphism: \(\pull{f_{X_1}}(\is{l_2})\) and \(\is{l_2} \times \is{\id_{(X_1, f_{X_1})}}\) are both obtained as the unique morphism between \((Z, f_Z) \times (X_1, f_{X_1})\) and \(((X_1, f_{X_1}) \Rightarrow (X_2, f_{X_2})) \times (X_1, f_{X_1})\) using the universal property of the pullback \(f_{X_1}^*l_2\), and \(\comp{f_{X_1}}\) on morphisms is the identity. Hence, we get:}
		       & = \is{\eval}_{(X_1, f_{X_1}), (X_2, f_{X_2})} \circ (\is{l_2} \times \is{\id_{(X_1, f_{X_1})}})
	\end{align*}
	Hence, the universal property of the evaluation map is proven, and we let
	\[\is{\lambda(h)} = \is{l_2} \]
	which is unique by uniqueness of \(\is{l_1}\) and \(\is{l_2}\) and is a morphism in \(\slic{C}{A}\) by construction.
\end{proof}

\chapter{On Monads, Kleisli Category and Eilenberg–Moore Category}

\section{Monads}

\begin{definition}[Monad (or triple in monoid form, or Kleisli triple)~{\cites[61]{Moggi1991}[8]{Benton2000a}[5]{Benton2000b}}]
	\label{def:monad}
	A monad \(\monad{T}\) over a category \(\ccat{C}\) is a triple \((T, \unit, \mult)\), where
	\begin{itemize}
		\item \(T : \ccat{C} \to \ccat{C}\) is an endofunctor, called \emph{the carrier},
		\item \(\unit : \id_{\ccat{C}} \natt T\) is a natural transformation, called \emph{the unit},
		\item \(\mult : T^2 \natt T\) is a natural transformation, called \emph{the multiplication}
	\end{itemize}
	such that, for all object \(A\) in \(\ccat{C}\), the following commute:

	{\centering

	\tikz{
		\node (T3) at (0,2) {\(T^3A\)};
		\node (T2) at (0,0) {\(T^2A\)};
		\node (T2') at (2,2) {\(T^2A\)};
		\node (T) at (2,0) {\(TA\)};
		\draw [->] (T3) to node[above]{\(T \mult_A\)} (T2');
		\draw [->] (T3) to node[left]{\(\mult_{TA}\)} (T2);
		\draw [->] (T2) to node[below]{\(\mult_{A}\)} (T);
		\draw [->] (T2') to node[right]{\(\mult_{A}\)} (T);
	}
	\qquad
	\tikz{
		\node (T) at (0,2) {\(TA\)};
		\node (T2) at (0,0) {\(T^2A\)};
		\node (T2') at (2,2) {\(T^2A\)};
		\node (T') at (2,0) {\(TA\)};
		\draw [->] (T) to node[above]{\(\unit_{TA}\)} (T2');
		\draw [->] (T) to node[left]{\(T\unit_A \)} (T2);
		\draw [->] (T2) to node[below]{\(\mult_A\)} (T');
		\draw [->] (T2') to node[right]{\(\mult_A\)} (T');
		\draw[double distance=2pt] (T) to node [above right= 0.0cm and -0.1cm]{\(\id_{TA}\)} (T');
	}

	}
\end{definition}

The definition of Kleisli triples and monads can vary slightly, but they are in bijection~\cite[8]{Benton2000b}.

\begin{definition}[Properties of monad]
	A monad \(\monad{T} = (T, \unit, \mult)\) over a category \(\ccat{C}\) is
	\begin{description}[style=unboxed]
		\item[(Left) Strong~{\cites[74]{Moggi1991}[168]{Jacobs1991}}] if \(\ccat{C}\) is monoidal, and \((T, \lst)\) is a (left) strong functor (\autoref{def:prop-functors}), such that the following commutes:

		      {\centering

		      \tikz{
			      \node (IxX) at (0,2) {\(I \otimes X\)};
			      \node (IxTX) at (4,2) {\(I \otimes TX\)};
			      \node (TIxX) at (2,0) {\(T(I \otimes X)\)};
			      \draw [->] (IxX) to node[above]{\(\id_I \otimes \unit_X\)} (IxTX);
			      \draw [->] (IxX) to node[left]{\(\unit_{I \otimes X}\)} (TIxX);
			      \draw [->] (IxTX) to node[above right= -0.2cm and 0.4cm]{\(\lst_{I, X}\)} (TIxX);
		      }

		      \tikz{
			      \node (IxT2X) at (0,2) {\(I \otimes T^2X\)};
			      \node (TIxTX) at (4,2) {\(T(I \otimes TX)\)};
			      \node (T2IxX) at (8,2) {\(T^2(I \otimes X)\)};
			      \draw [->] (IxT2X) to node[above]{\(\lst_{I, TX}\)} (TIxTX);
			      \draw [->] (TIxTX) to node[above]{\(T\lst_{I,TX}\)} (T2IxX);
			      \node (IxTX) at (0,0) {\(I \otimes TX\)};
			      \node (TIxX) at (8,0) {\(T(I \otimes X)\)};
			      \draw [->] (IxT2X) to node[left]{\(\id_I \otimes \mult_X\)} (IxTX);
			      \draw [->] (IxTX) to node[below]{\(\lst_{I,X}\)} (TIxX);
			      \draw [->] (T2IxX) to node[right]{\(\mult_{I \otimes X}\)} (TIxX);
		      }

		      }
		      Note that if \(\ccat{C}\) is symmetric, then a
		      \emph{swapped (or twisted) strength map} \(\sst_{A,B}^l : TA \otimes B \to T(A \otimes B)\) can be defined as \(T\sym_{B, A} \circ \lst_{B,A} \circ \sym_{TA, B}\).
		\item[Right Strong] if \(\ccat{C}\) is monoidal, and \((T, \rst)\) is a right strong functor that obey similar laws.
		      Note that if \(\ccat{C}\) is symmetric, then a
		      \emph{swapped (or twisted) strength map} \(\sst_{A,B}^r : A \otimes TB \to T(A \otimes B)\) can be defined as \(T\sym_{B, A} \circ \rst_{B,A} \circ \sym_{A, TB}\).
		\item[Commutative~{\cite[203]{Jacobs2018}}] if \(\ccat{C}\) is symmetric, \(\monad{T}\) is a right and left strong monad, and the two morphisms \(\mult \circ T\sst^r \circ \rst\) and \(\mult \circ T\sst^l \circ \lst\) are equal, in which case it is named the \emph{double strength} and written \(\dst_{A,B} : TA \otimes TB \to T(A \otimes B)\).
		\item[Affine~{~\cite[Definition 1]{Jacobs2016a}}] if \(\ccat{C}\) has a terminal object \(\one\), and \(T\one \cong \one\).
	\end{description}
\end{definition}

There is a long and interesting development about right strong monads, and commutative monads, that can be found in~\cites[71]{Mulry2013}[252--257]{Maclane1971}.
Affine, commutative, and strongly affine monads are developed in~\cite{Jacobs2016a,Jacobs2016b,Jacobs1994}, but the original theory is in~\cite{Linder1979}.
An alternative definition of strong monad, involving prestrenghts and what the author calls Kleisli strength, can be found in~\cite{Mulry2013}.

\begin{definition}[Kleisli liftings~{\cite[28]{Manes1976}}]
	\label{def:liftkl}
	Given \(\monad{T} = (T, \unit, \mult)\) a monad over \(\ccat{C}\), for \(f : A \to T B\), we define \emph{the Kleisli lifting of \(f\)} to be \(\liftkl{f} = \mult_{TB} \circ Tf : TA \to TB\).
\end{definition}

\autoref{sec:cheat-monad} gathers the equalities about the monads, the left strength, and \(\monad{T}\)-algebras (whose definition follows in \autoref{sec:em}) as well as some of the equalities that can be immediately inferred from them, that we will use in the rest of this document.

\section{Kleisli Categories}
\label{sec:kleisli}

\begin{definition}[Kleisli category~{\cite[147]{Maclane1971}}]
	Given \(\monad{T} = (T, \unit, \mult)\) a monad over \(\ccat{C}\), the Kleisli category \(\ccat{C}_{\monad{T}}\) is the category whose

	\begin{description}[style=unboxed]
		\item[objects] are the objects of \(\ccat{C}\),
		\item[morphisms] are morphisms in \(\ccat{C}\) whose target is of the form \(TX\) for \(X\) in \(\ccat{C}\), i.e. \(\Hom_{\ccat{C}_{\monad{T}}}(A, B) = \Hom_{\ccat{C}}(A, TB)\),
		\item[identity] is \(\eta_A : A \to TA\),
		\item[composition] of \(f \) in \( \Hom_{\ccat{C}_{\monad{T}}}(A,B)\) and \(g\) in \( \Hom_{\ccat{C}_{\monad{T}}}(B, C)\), \(g \circ f\) in \( \Hom_{\ccat{C}_{\monad{T}}}(A,C)\) is \(\liftkl{g} \circ f :A \to TC\).
	\end{description}
\end{definition}

\begin{remark}
	For \(f \) in \( \Hom_{\ccat{C}_{\monad{T}}}(A,B)\) and \(g\) in \( \Hom_{\ccat{C}_{\monad{T}}}(B, C)\),

	\begin{enumerate}

		\item \label{rem-hash}
		      Composition with the identity behaves as expected:
		      \begin{align*}
			      \liftkl{(f \circ \unit)} & = \mult \circ Tf \circ T\unit \tag{\autoref{def:liftkl}}       \\
			                               & = f \circ \mult \circ T \unit \tag{By naturality of \(\mult\)} \\
			                               & = f \tag{\ref{m2}}
		      \end{align*}

		\item \label{kl-commutes}
		      \begin{align*}
			      \liftkl{(\liftkl{g} \circ f)} & = \liftkl{(\mult \circ T g \circ f)} \tag{\autoref{def:liftkl}}      \\
			                                    & = \mult \circ T (\mult \circ T g \circ f) \tag{\autoref{def:liftkl}} \\
			                                    & = \mult \circ T \mult \circ T^2 g \circ T f                          \\
			                                    & = \mult \circ Tg \circ \mult \circ T f \tag{\ref{m5}}                \\
			                                    & = \liftkl{g} \circ \liftkl{f} \tag{\autoref{def:liftkl}}
		      \end{align*}

	\end{enumerate}
\end{remark}

\section{Eilenberg–Moore Categories}
\label{sec:em}

\begin{definition}[Eilenberg–Moore category]
	Given \(\monad{T} = (T, \unit, \mult)\) a monad over \(\ccat{C}\), the Eilenberg-Moore category \(\ccat{C}^{\monad{T}}\) is the category whose
	\begin{description}[style=unboxed]
		\item[objects] are \(\monad{T}\)-algebras, i.e., \(\alg{A} = (A, f_A)\) where \(A\) is the \emph{carrier}, i.e. an object in \(\ccat{C}\), and \(f_A\) is a \emph{\(\monad{T}\)-action}, i.e., a morphism \(TA \to A\) such that the following commutes:

		      {
		      \centering

		      \tikz{
			      \node (T) at (0,2) {\(A\)};
			      \node (T2') at (2,2) {\(TA\)};
			      \node (T') at (2,0) {\(A\)};
			      \draw [->] (T) to node[above]{\(\unit_{A}\)} (T2');
			      \draw [->] (T2') to node[right]{\(f_A\)} (T');
			      \draw[double distance=2pt] (T) to node [left]{\(\id_{A}\)} (T');
		      }
		      \qquad
		      \tikz{
			      \node (T) at (0,2) {\(T^2A\)};
			      \node (T2) at (0,0) {\(TA\)};
			      \node (T2') at (2,2) {\(TA\)};
			      \node (T') at (2,0) {\(A\)};
			      \draw [->] (T) to node[above]{\(Tf_A\)} (T2');
			      \draw [->] (T) to node[left]{\(\mult_A \)} (T2);
			      \draw [->] (T2) to node[below]{\(f_A\)} (T');
			      \draw [->] (T2') to node[right]{\(f_A\)} (T');
		      }

		      }

		\item[morphisms] are the \emph{\(\monad{T}\)-homomorphisms between \(\monad{T}\)-algebras}, i.e. a morphism between \(\alg{A} = (A, f_A)\) and \(\alg{B} = (B, f_B)\) is a morphism \(f : A \to B\) in \(\ccat{C}\) such that
		      \begin{align*}
			      f \circ f_A = f_B \circ Tf
		      \end{align*}

		\item[identity] is the identity on the carrier,
		\item[composition] is the composition of the underlying morphisms in \(\ccat{C}\).
	\end{description}
\end{definition}

\begin{definition}[\(\monad{T}\)-algebra homomorphism in its right-hand argument (\(\AHom\))~{\cite[192]{Levy2001}}\footnote{Thank to Paul Blain Levy for pointing out the right definition.}]
	If \(\ccat{C}\) has product and \(\monad{T}\) is a (left) strong monad on \(\ccat{C}\), then given an object \(B\) in \(\ccat{C}\), and two algebras \(\alg{A} = (A, f_A)\) and \(\alg{C} = (C, f_C)\) in \(\ccat{C}^{\monad{T}}\), we say that a morphism \(f : B \times A \to C\) in \(\ccat{C}\) is \emph{a \(\monad{T}\)-algebra homomorphism in its right-hand argument} if the following diagram commutes:

	{\centering

	\begin{tikzpicture}
		\node (BxTA) at (0, 4.5) {\(B \times TA\)};
		\node (BxA) at (0, 3) {\(B \times A\)};
		\node (TBxA) at (3, 4.5) {\(T(B \times A)\)};
		\node (TC) at (6, 4.5) {\(TC\)};
		\node (C) at (6, 3) {\(C\)};

		\draw [->] (BxTA) to node[left]{\(\id \times f_A\)} (BxA);
		\draw [->] (TC) to node[right]{\(f_C\)} (C);
		\draw [->] (BxTA) to node[above]{\(\lst\)} (TBxA);
		\draw [->] (TBxA) to node[above]{\(Tf\)} (TC);
		\draw [->] (BxA) to node[below]{\(f\)} (C);
	\end{tikzpicture}

	}

	We write \(\AHom_{\ccat{C}}(B \times \alg{A}, \alg{C})\)\footnote{However, it should be stressed that \(B \times \alg{A}\) is \emph{not} an object in \(\ccat{C}\) nor in \(\ccat{C}^{\monad{T}}\), we are just using it as a convenient notation.} to denote the subcollection of morphisms in \(\ccat{C}\) from \(B \times A\) to \(C\) that are \(\monad{T}\)-algebra homomorphisms in their right-hand arguments.
\end{definition}

\begin{lemma}\label{lem:pi_Ahom}
	For all \(D\) in \(\ccat{C}\) and \(\alg{A} = (A, f_A)\) in \(\ccat{C}^{\monad{T}}\), \(\pi_{2} : D
	\times A \to A\) is in \(\AHom_{\ccat{C}}(D \times \alg{A}, \alg{A})\).
\end{lemma}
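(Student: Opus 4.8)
The plan is to unfold the definition of $\AHom$ in the special case at hand and reduce the required commutation to a single, purely structural identity relating the strength to the second projection. Instantiating the defining diagram of $\AHom_{\ccat{C}}(D \times \alg{A}, \alg{A})$ with $B = D$, both algebras taken to be $\alg{A} = (A, f_A)$, and $f = \pi_2 : D \times A \to A$, what must be verified is exactly
\[
\pi_2 \circ (\id_D \times f_A) = f_A \circ T\pi_2 \circ \lst_{D, A},
\]
where $\lst_{D,A} : D \times TA \to T(D \times A)$ is the (left) strength of $\monad{T}$.

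First I would simplify the left-hand side. Using the elementary product identity $\pi_2 \circ (g \times h) = h \circ \pi_2$ (recorded in \autoref{sec:cheat-cartesian}), the left-hand side equals $f_A \circ \pi_2$, with $\pi_2 : D \times TA \to TA$. Hence the whole equation follows once I establish the strength--projection identity
\[
T\pi_2 \circ \lst_{D, A} = \pi_2 \qquad (\pi_2 : D \times TA \to TA),
\]
after which post-composing both sides with $f_A$ closes the argument. Note that the algebra axioms on $f_A$ play no role whatsoever here, which is a useful sanity check.

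It remains to prove this last identity, which is the crux. For the base case $D = \one$ it is immediate: in the cartesian monoidal structure the left unitor $\lunitor$ \emph{is} $\pi_2$, so the unitor-coherence triangle in the definition of a (left) strong functor (\autoref{def:prop-functors}) reads precisely $T\pi_2 \circ \lst_{\one, A} = \pi_2$. To lift this to an arbitrary $D$, I would invoke naturality of $\lst$ in its first argument along the terminal map $!_D : D \to \one$, namely $\lst_{\one, A} \circ (!_D \times \id_{TA}) = T(!_D \times \id_A) \circ \lst_{D, A}$, together with the factorization $\pi_2 = \pi_2 \circ (!_D \times \id)$ (again from $\pi_2 \circ (g \times h) = h \circ \pi_2$) used once at the $A$-level and once at the $TA$-level. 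Chaining these---apply $T$ to the factorization of $\pi_2$, rewrite via naturality, invoke the base case, then collapse the terminal map---yields $T\pi_2 \circ \lst_{D,A} = \pi_2$ for general $D$.

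The main obstacle is conceptual rather than computational: one has to recognize that the fact needed is just the cartesian shadow of the unitor-coherence law for the strength, and that passing from $D = \one$ to general $D$ is nothing more than a naturality argument. The only place demanding care is bookkeeping of \emph{which} second projection is meant at each stage---those out of $D \times A$, $\one \times A$, $D \times TA$, and $\one \times TA$---since the identity is trivially false if the wrong unitor or projection is substituted. I would expect $T\pi_2 \circ \lst = \pi_2$ to appear already among the strength equalities of \autoref{sec:cheat-monad}, in which case the lemma becomes a one-line consequence of that identity and $\pi_2 \circ (\id \times f_A) = f_A \circ \pi_2$.
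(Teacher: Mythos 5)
Your proposal is correct and takes essentially the same route as the paper, whose entire proof is the one-liner you anticipated: \(\pi_2 \circ (\id \times f_A) = f_A \circ \pi_2 = f_A \circ T\pi_2 \circ \lst\) by \ref{p4} and \ref{s3}. The strength--projection identity \(T\pi_2 \circ \lst = \pi_2\) is indeed recorded as \ref{s3} in \autoref{sec:cheat-monad}, so your additional derivation of it from the unitor coherence and naturality of \(\lst\) is sound but unnecessary here.
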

\begin{proof}
	\(\pi_2 \circ (\id \times f_A) = f_A \circ \pi_2 = f_A \circ T\pi_2 \circ \lst\) by \ref{p4} and \ref{s3}.
\end{proof}

Finally, we note that \(\AHom\) has some nice closure properties:

\begin{lemma}[Closure properties of \(\AHom\)]\label{lem:comp_Ahom}
	Let \(D\) be in \(\ccat{C}\), \(\alg{A} = (A, f_A)\), \(\alg{C} = (C, f_C)\) in
	\(\ccat{C}^{\monad{T}}\), and \(f\) be in \(\AHom_{\ccat{C}}(D \times \alg{A}, \alg{C})\).
	\begin{enumerate}
		\item For all \(D'\) in \(\ccat{C}\) and \(g : D' \to D\), \(f \circ (g \times
		      \id)\) is in \(\AHom_{\ccat{C}}(D' \times \alg{A}, \alg{C})\).
		\item For all \(\alg{B}\) in \(\ccat{C}^{\monad{T}}\) and \(g\) in \(\AHom_{\ccat{C}}(D \times \alg{B}, \alg{A})\), the morphism \(f \circ \langle \pi_1, g\rangle\) is in
		      \(\AHom_{\ccat{C}}(D \times \alg{B}, \alg{C})\).
	\end{enumerate}
\end{lemma}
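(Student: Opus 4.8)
The plan is to check, in both items, the single defining equation of \(\AHom\): a morphism \(k\) into the carrier of \(\alg{C}\) lies in \(\AHom_{\ccat{C}}(E \times \alg{A}, \alg{C})\) exactly when \(k \circ (\id \times f_A) = f_C \circ Tk \circ \lst\). Only three tools are needed: the defining equations of \(f\) and \(g\) as right-hand homomorphisms, the interchange and projection laws for the binary product, and the naturality and coherence of the left strength \(\lst\).

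For item~1, set \(k = f \circ (g \times \id_A)\). I would first rewrite the left-hand side using the product interchange law \((g \times \id_A) \circ (\id \times f_A) = (\id \times f_A) \circ (g \times \id_{TA})\), then apply the hypothesis \(f \circ (\id \times f_A) = f_C \circ Tf \circ \lst_{D,A}\), and finally push \(g\) through the strength by naturality of \(\lst\) in its first argument, \(\lst_{D,A} \circ (g \times \id_{TA}) = T(g \times \id_A) \circ \lst_{D',A}\). Collecting \(Tf \circ T(g \times \id_A) = Tk\) gives exactly the required equation. This item is routine bookkeeping with no real obstacle.

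For item~2, set \(k = f \circ \langle \pi_1, g\rangle\). Precomposing with \(\id \times f_B\) and using \(\langle \pi_1, g\rangle \circ (\id \times f_B) = \langle \pi_1, g \circ (\id \times f_B)\rangle\) (pairing absorbs precomposition, and \(\pi_1 \circ (\id \times f_B) = \pi_1\)), I would substitute \(g\)'s defining equation \(g \circ (\id \times f_B) = f_A \circ Tg \circ \lst_{D,B}\). Factoring the algebra action out of the second component, \(\langle \pi_1, f_A \circ Tg \circ \lst_{D,B}\rangle = (\id \times f_A) \circ \langle \pi_1, Tg \circ \lst_{D,B}\rangle\), and applying \(f\)'s defining equation reduces the whole problem to the purely strength-theoretic identity
\[ \lst_{D,A} \circ \langle \pi_1, Tg \circ \lst_{D,B}\rangle = T\langle \pi_1, g\rangle \circ \lst_{D,B}, \]
which I claim holds for \emph{any} \(g : D \times B \to A\) (the \(\AHom\) property of \(g\) has already been consumed in the reduction).

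The heart of the argument — and the step I expect to be the main obstacle — is this last strength identity. I would prove it by decomposing \(\langle \pi_1, g\rangle = (\id \times g) \circ \langle \pi_1, \id\rangle\) and recognising \(\langle \pi_1, \id_{D \times B}\rangle = \assoc_{D,D,B} \circ (\dupl_D \times \id_B)\). Naturality of \(\lst\) in the second argument moves \(\id \times g\) across, and naturality in the first argument applied to the diagonal \(\dupl_D\) moves \(\dupl_D\) across; what remains is precisely the associativity coherence diagram for the left strength from \autoref{def:prop-functors}, instantiated at \((D, D, B)\), together with the pairing law \((\id \times \lst_{D,B}) \circ \langle \pi_1, \id\rangle = \langle \pi_1, \lst_{D,B}\rangle\). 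Assembling these yields the identity, and hence item~2. A sanity check in \(\Set\) with the maybe monad confirms that the hypothesis on \(g\) is genuinely used only in the reduction and not in the final strength identity, which is a good indication that the decomposition above is the right one.
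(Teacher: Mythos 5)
Your proposal is correct and follows essentially the same route as the paper's proof: the same reduction in both items (interchange/pairing laws, then the defining equations of \(f\) and \(g\), then naturality of \(\lst\)), and the same key strength identity \(\lst \circ \langle \pi_1, Tg \circ \lst\rangle = T\langle \pi_1, g\rangle \circ \lst\), proved exactly as the paper does via \(\langle \pi_1, \id\rangle = \assoc \circ (\dupl \times \id)\), naturality of the strength, and the associativity coherence of \(\lst\). Your observation that this last identity holds for arbitrary \(g\), the \(\AHom\) hypothesis having been consumed earlier, is also implicit in the paper's computation.
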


\begin{proof}
	\begin{enumerate}
		\item
		      \begin{align*}
			        & f \circ (g \times \id) \circ (\id \times f_A)                       \\
			      = & f \circ (\id \times f_A) \circ (g \times \id) \tag{\ref{p9}}        \\
			      = & f_C \circ Tf \circ \lst \circ (g \times \id) \tag{Since \(f\) is in
				      \(\AHom_{\ccat{C}}(D \times \alg{A}, \alg{C})\)}                        \\
			      = & f_C \circ Tf \circ \lst \circ (g \times T\id)                       \\
			      = & f_C \circ Tf \circ T (g \times \id) \circ \lst \tag{\ref{s4}}       \\
			      = & f_C \circ T(f \circ (g \times \id)) \circ \lst
		      \end{align*}

		\item This part of the proof has multiple steps, and requires to take associativity explicitly into account.
		      \begin{align*}
			        & f \circ \langle \pi_1, g \rangle \circ (\id \times f_B)             \\
			      = & f \circ \langle \pi_1 \circ (\id \times f_B), g\circ (\id \times
			      f_B) \rangle \tag{\ref{p1}}                                             \\
			      = & f \circ \langle \pi_1, g\circ (\id \times f_B) \rangle
			      \tag{\ref{p4}}                                                          \\
			      = & f \circ \langle \pi_1, f_A \circ Tg \circ \lst\rangle \tag{Since
				      \(g\) is in \(\AHom_{\ccat{C}}(D \times \alg{B}, \alg{A})\)}            \\
			      = & f \circ (\id \times f_A) \circ \langle \pi_1, Tg \circ \lst
			      \rangle \tag{\ref{p2}}                                                  \\
			      = & f_C \circ Tf \circ \lst \circ \langle \pi_1, Tg \circ \lst \rangle
			      \tag{Since \(f\) is in \(\AHom_{\ccat{C}}(D \times \alg{A}, \alg{C})\)} \\
			      = & f_C \circ Tf \circ T \langle \pi_1, g \rangle \circ \lst \tag{See
				      below}                                                                  \\
			      = & f_C \circ T( f \circ \langle \pi_1, g \rangle) \circ \lst
		      \end{align*}

		      We prove that \(\lst \circ \langle \pi_1, Tg \circ \lst \rangle = T \langle \pi_1, g \rangle \circ \lst\) as follows.
		      First, observe that
		      \begin{align*}
			        & \assoc \circ (\dupl \times \id)                                                                                                                           \\
			      = & \langle \pi_1 \circ \pi_1, \pi_2 \times \id \rangle \circ (\dupl \times \id) \tag{\ref{defassoc}}                                                         \\
			      = & \langle \pi_1 \circ \pi_1 \circ (\dupl \times \id), (\pi_2 \times	\id)\circ (\dupl \times \id) \rangle \tag{\ref{p2}}                                      \\
			      = & \langle \pi_1 \circ \pi_1 \circ (\langle \id, \id\rangle \times	\id), (\pi_2 \times \id)\circ (\langle \id, \id\rangle \times \id) \rangle \tag{\ref{p6}}  \\
			      = & \langle \pi_1 \circ \pi_1 \circ (\langle \id, \id\rangle \times	\id), (\pi_2 \circ \langle \id, \id\rangle) \times (\id \circ \id) \rangle \tag{\ref{p16}} \\
			      = & \langle \pi_1 \circ \langle \id, \id\rangle \circ \pi_1, (\pi_2	\circ \langle \id, \id\rangle) \times (\id \circ \id) \rangle \tag{\ref{p4}}               \\
			      = & \langle \id \circ \pi_1, \id \times \id \rangle \tag{\ref{p5}}                                                                                            \\
			      = & \langle \pi_1, \id \rangle                                                                                                                                \\
			      = & \langle \pi_1 \circ \id, \id \circ \id \rangle                                                                                                            \\
			      = & (\pi_1 \times \id) \circ \langle \id, \id \rangle \tag{p2}                                                                                                \\
			      = & (\pi_1 \times \id) \circ \dupl \tag{\ref{p6}}
		      \end{align*}
		      Hence, we get:
		      \begin{align*}
			      \lst \circ \langle \pi_1, Tg \circ \lst \rangle & = \lst \circ (\pi_1 \times (Tg\circ \lst)) \circ \dupl \tag{\ref{p19}}                                     \\
			                                                      & = \lst \circ (\id \times (Tg\circ \lst)) \circ (\pi_1 \times \id) \circ \dupl \tag{\ref{p9}}               \\
			                                                      & = \lst \circ (\id \times (Tg\circ \lst)) \circ \assoc \circ (\dupl \times \id) \tag{Previous remark}       \\
			                                                      & = \lst \circ (\id \times Tg) \circ (\id\times \lst) \circ \assoc \circ (\dupl \times \id) \tag{\ref{p15}}  \\
			                                                      & = T (\id \times g) \circ \lst \circ (\id \times \lst) \circ \assoc \circ (\dupl \times \id) \tag{\ref{s4}} \\
			                                                      & = T (\id \times g) \circ T\assoc \circ \lst \circ (\dupl \times \id) \tag{\ref{s5}}                        \\
			                                                      & = T (\id \times g) \circ T\assoc \circ \lst \circ (\dupl \times T \id)                                     \\
			                                                      & = T (\id \times g) \circ T\assoc \circ T (\dupl \times \id) \circ \lst \tag{\ref{s4}}                      \\
			                                                      & = T ((\id \times g) \circ \assoc \circ (\dupl \times \id)) \circ \lst                                      \\
			                                                      & = T((\id \times g) \circ (\pi_1 \times \id) \circ \dupl) \circ \lst	\tag{Previous remark}                   \\
			                                                      & = T((\pi_1 \times g)\circ \dupl) \circ \lst \tag{\ref{p9}}                                                 \\
			                                                      & = T \langle \pi_1 , g \rangle \circ \lst \tag{\ref{p19}}
		      \end{align*}
	\end{enumerate}
\end{proof}

\subsection{Terminal Object}

\begin{theorem}\label{thm:alg-term-obj}
	If \(\ccat{C}\) has a terminal object \(\one\), then \(\alg{\one} = (\one, !_{T\one})\) is a terminal object in \(\ccat{C}^{\monad{T}}\).%
\end{theorem}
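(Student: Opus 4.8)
The plan is to exploit the single fact that $\one$ is terminal in $\ccat{C}$: any two parallel morphisms whose codomain is $\one$ are equal, and from every object there is exactly one morphism to $\one$. Every verification below collapses to this one observation, so the argument is essentially bookkeeping with the universal property of $\one$.

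First I would check that $\alg{\one} = (\one, !_{T\one})$ is a genuine $\monad{T}$-algebra, i.e.\ that the two diagrams of the Eilenberg--Moore definition commute. The unit law requires $!_{T\one} \circ \unit_{\one} = \id_{\one}$; both sides are morphisms $\one \to \one$, hence equal by terminality. The multiplication law requires $!_{T\one} \circ T!_{T\one} = !_{T\one} \circ \mult_{\one}$; both sides are morphisms $T^2\one \to \one$, hence equal for the same reason. Thus $\alg{\one}$ is an object of $\ccat{C}^{\monad{T}}$.

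Next I would establish terminality. Given any algebra $\alg{B} = (B, f_B)$, a $\monad{T}$-homomorphism $\alg{B} \to \alg{\one}$ is by definition a morphism $f : B \to \one$ in $\ccat{C}$ satisfying $f \circ f_B = !_{T\one} \circ Tf$. Since $\one$ is terminal, the only candidate for the underlying morphism is $!_B$, which settles uniqueness at once. It then remains to confirm that $!_B$ really is a homomorphism: the required equation $!_B \circ f_B = !_{T\one} \circ T!_B$ is an equality between two morphisms $TB \to \one$, so it holds by terminality. Hence there is exactly one morphism $\alg{B} \to \alg{\one}$, and $\alg{\one}$ is terminal in $\ccat{C}^{\monad{T}}$.

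There is no genuine obstacle here; the only point worth stressing is that the terminal object of $\ccat{C}$ \enquote{absorbs} every coherence condition in sight. Neither the two algebra axioms nor the homomorphism axiom need to be verified by a computation involving $\unit$ and $\mult$ — each is an equality of maps into $\one$, and is therefore automatic. The same phenomenon explains why the naturality squares one might otherwise invoke never enter the proof.
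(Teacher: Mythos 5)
Your proof is correct and follows essentially the same route as the paper's: both verify the two algebra axioms for \((\one, !_{T\one})\) and the homomorphism condition for \(!_B\) by observing that each required equation is an equality of morphisms into \(\one\), hence automatic by terminality in \(\ccat{C}\). Your write-up is in fact slightly more explicit than the paper's on the uniqueness step (noting that any homomorphism's underlying morphism must be \(!_B\)), which the paper leaves implicit.
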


\begin{proof}
	First, observe that \(\alg{\one} = (\one, !_{T\one})\) is an object in \(\ccat{C}^{\monad{T}}\):

	{\centering

	\begin{tikzpicture}
		\node (TT1) at (0, 3) {\(T^2\one\)};
		\node (T1) at (3, 3) {\(T\one\)};
		\node (T12) at (0, 0) {\(T\one\)};
		\node (11) at (3, 0) {\(\one\)};
		\node (12) at (5, 0) {\(\one\)};
		\draw [->] (TT1) to node[above]{\(T!_{T\one}\)}(T1);
		\draw [->] (TT1) to node[left]{\(\mult_{\one}\)} (T12);
		\draw [->] (T12) to node[below]{\(!_{T\one}\)} (11);
		\draw [->] (T1) to node[right]{\(!_{T\one}\)} (11);
		\draw [double distance=2pt] (11) to node[below]{\(\id_{\one}\)} (12);
		\draw [->] (12) to node[right]{\(\unit_{\one}\)}(T1);
	\end{tikzpicture}

	}

	all commutes because there is only one morphism from \(T^2\one\) to \(\one\), and only one morphism from \(\one\) to \(\one\) in \(\ccat{C}\).

	Given \((A, f_A)\) in \(\ccat{C}^{\monad{T}}\), we use that \(\one\) is terminal in \(\ccat{C}\) to obtain a morphism \(!_A : A \to \one\), and note that it is a morphism in \(\ccat{C}^{\monad{T}}\):

	{
	\centering

	\begin{tikzpicture}
		\node (TA) at (0, 2.5) {\(TA\)};
		\node (A) at (0, 0) {\(A\)};
		\node (T1) at (2.5, 2.5) {\(T\one\)};
		\node (1) at (2.5, 0) {\(\one\)};
		\draw [->] (TA) to node[left]{\(f_A\)} (A);
		\draw [->] (TA) to node[above]{\(T!_{A}\)} (T1);
		\draw [->] (A) to node[below]{\(!_{A}\)} (1);
		\draw [->] (T1) to node[right]{\(!_{T\one}\)} (1);
	\end{tikzpicture}

	}

	Everything commutes in this diagram because there is only one morphism
	from \(TA\) to \(\one\) in \(\ccat{C}\).
\end{proof}

\subsection{Products}

\begin{theorem}
	\label{thm:alg-product}
	If \(\ccat{C}\) has product, then
	\(\ccat{C}^{\monad{T}}\) has products, defined by \(\alg{A} \times\alg{B} = (A \times B,
	((f_A \times f_B) \circ \langle T\pi_1, T\pi_2 \rangle))\) and with
	projections \(\pi_i\) inherited from \(\ccat{C}\).
\end{theorem}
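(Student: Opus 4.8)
The plan is to lift the product of $\ccat{C}$ to $\ccat{C}^{\monad{T}}$, showing at each step that the structure inherited from $\ccat{C}$ is compatible with the $\monad{T}$-algebra structure. Write $m_{X,Y} = \langle T\pi_1, T\pi_2\rangle : T(X \times Y) \to TX \times TY$ for the canonical mediating map, so that the proposed action is $g = (f_A \times f_B) \circ m_{A,B} : T(A \times B) \to A \times B$. There are three things to check: that $(A \times B, g)$ is a $\monad{T}$-algebra, that $\pi_1$ and $\pi_2$ are $\monad{T}$-homomorphisms out of it, and that the pairing $\langle h_1, h_2\rangle$ supplied by the product in $\ccat{C}$ is the unique mediating $\monad{T}$-homomorphism.

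First I would record the combinatorial facts that drive every computation, all consequences of the product identities of \autoref{sec:cheat-cartesian} together with functoriality of $T$: the naturality of $\unit$ and $\mult$ read off $\pi_i$, giving $m_{A,B} \circ \unit_{A \times B} = \unit_A \times \unit_B$ and $m_{A,B} \circ \mult_{A \times B} = (\mult_A \times \mult_B) \circ \langle T^2\pi_1, T^2\pi_2\rangle$; the naturality of $m$, giving $m_{A,B} \circ T(f_A \times f_B) = (Tf_A \times Tf_B) \circ m_{TA,TB}$; and the composite identity $m_{TA,TB} \circ T m_{A,B} = \langle T^2\pi_1, T^2\pi_2\rangle$, obtained by pushing $T$ past a pairing and cancelling projections.

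With these in hand the algebra laws are routine. For the unit law, $g \circ \unit_{A \times B} = (f_A \times f_B) \circ (\unit_A \times \unit_B) = (f_A \circ \unit_A) \times (f_B \circ \unit_B) = \id_{A \times B}$, using the unit laws of $\alg{A}$ and $\alg{B}$. For the multiplication law I would reduce both sides to the common form $(f_A \times f_B) \circ (Tf_A \times Tf_B) \circ \langle T^2\pi_1, T^2\pi_2\rangle$: the left side $g \circ \mult_{A \times B}$ via naturality of $\mult$ and the algebra laws $f_A \circ \mult_A = f_A \circ Tf_A$; the right side $g \circ Tg$ via naturality of $m$ followed by the composite identity. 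That the projections are homomorphisms is immediate, since $\pi_i \circ g = f_i \circ \pi_i \circ m_{A,B} = f_i \circ T\pi_i$.

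The universal property then transfers cleanly. Given $\monad{T}$-homomorphisms $h_1 : \alg{D} \to \alg{A}$ and $h_2 : \alg{D} \to \alg{B}$, take $v = \langle h_1, h_2\rangle$ from the product in $\ccat{C}$; the only thing to verify is that $v$ respects the actions, and indeed $g \circ Tv = (f_A \times f_B) \circ \langle Th_1, Th_2\rangle = \langle f_A \circ Th_1, f_B \circ Th_2\rangle = \langle h_1 \circ f_D, h_2 \circ f_D\rangle = v \circ f_D$, using $m_{A,B} \circ Tv = \langle Th_1, Th_2\rangle$ and the homomorphism laws of $h_1, h_2$. Uniqueness is inherited verbatim from $\ccat{C}$, since any competing $\monad{T}$-homomorphism $v'$ is in particular a $\ccat{C}$-morphism satisfying $\pi_i \circ v' = h_i$. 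The one genuinely delicate step is the multiplication law, and within it the composite identity $m_{TA,TB} \circ T m_{A,B} = \langle T^2\pi_1, T^2\pi_2\rangle$, since this is the only place that commutes $T$ through a pairing rather than merely manipulating products in a fixed fibre.
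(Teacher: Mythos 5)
Your proof is correct and takes essentially the same route as the paper's: the same algebra structure on \(A \times B\), the same verification that the projections and the pairing \(\langle h_1, h_2\rangle\) inherited from \(\ccat{C}\) are \(\monad{T}\)-homomorphisms, and the same inheritance of uniqueness from the universal property in \(\ccat{C}\). The only difference is organizational: you isolate the naturality facts about \(\langle T\pi_1, T\pi_2\rangle\) as standalone identities and reduce both sides of the multiplication law to the common form \((f_A \times f_B) \circ (Tf_A \times Tf_B) \circ \langle T^2\pi_1, T^2\pi_2\rangle\), whereas the paper runs a single equational chain from \(g \circ Tg\) to \(g \circ \mult\) using the same cheat-sheet identities inline.
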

\begin{proof}
	We have to prove that
	\begin{enumerate*}
		\item that our candidate is an object in \(\ccat{C}^{\monad{T}}\),
		\item that our projections are \(\monad{T}\)-algebra homomorphisms, and
		\item that our candidate together with the projections satisfy the universal property of the product.
	\end{enumerate*}

	\begin{enumerate}
		\item We have to prove that \((f_A \times f_B) \circ \langle T\pi_1, T\pi_2 \rangle\) satisfies \ref{a1} and \ref{a2}, and we'll use that \(f_A\) and \(f_B\) satisfy them:
		      \begin{align*}
			      (f_A \times f_B) \circ \langle T\pi_1, T\pi_2 \rangle \circ \unit & = (f_A \times f_B) \circ \langle T\pi_1\circ \unit, T\pi_2\circ \unit
			      \rangle \tag{\ref{p1}}                                                                                                                                          \\
			                                                                        & =(f_A \times f_B) \circ \langle \unit \circ \pi_1, \unit \circ \pi_2 \rangle \tag{\ref{m4}} \\
			                                                                        & =\langle f_A \circ \unit \circ \pi_1, f_B \circ \unit \circ \pi_2 \rangle \tag{\ref{p2}}    \\
			                                                                        & =\langle \pi_1, \pi_2 \rangle \tag{\ref{a2}}                                                \\ &=\id \tag{\ref{p3}}
		      \end{align*}
		      \begin{align*}
			        & (f_A \times f_B) \circ \langle T\pi_1, T\pi_2 \rangle \circ T((f_A \times f_B) \circ \langle T\pi_1, T\pi_2 \rangle)                                       \\
			      = & (f_A \times f_B) \circ \langle T\pi_1\circ T(f_A \times f_B), T\pi_2\circ T(f_A \times f_B) \rangle \circ T(\langle T\pi_1, T\pi_2 \rangle) \tag{\ref{p1}} \\
			      = & (f_A \times f_B) \circ \langle T(\pi_1 \circ (f_A \times f_B)), T(\pi_2 \circ (f_A \times f_B)) \rangle \circ T(\langle T\pi_1, T\pi_2
			      \rangle)                                                                                                                                                       \\
			      = & (f_A \times f_B) \circ \langle T(f_A \circ \pi_1), T(f_B \circ \pi_2) \rangle \circ T(\langle T\pi_1, T\pi_2 \rangle) \tag{\ref{p4}}                       \\
			      = & \langle f_A \circ T(f_A \circ \pi_1), f_B \circ T(f_B \circ \pi_2)	\rangle \circ T(\langle T\pi_1, T\pi_2 \rangle) \tag{\ref{p2}}                           \\
			      = & \langle f_A \circ Tf_A \circ T\pi_1, f_B \circ T(f_B) \circ T\pi_2	\rangle \circ T(\langle T\pi_1, T\pi_2 \rangle)                                          \\
			      = & \langle f_A \circ \mult \circ T\pi_1, f_B \circ \mult \circ T\pi_2	\rangle \circ T(\langle T\pi_1, T\pi_2 \rangle) \tag{\ref{a2}}                           \\
			      = & (f_A \times f_B) \circ \langle \mult \circ T\pi_1, \mult \circ T\pi_2 \rangle \circ T(\langle T\pi_1, T\pi_2 \rangle)
			      \tag{\ref{p2}}                                                                                                                                                 \\
			      = & (f_A \times f_B) \circ \langle \mult \circ T\pi_1 \circ T(\langle T\pi_1, T\pi_2 \rangle), \mult \circ T\pi_2 \circ T(\langle T\pi_1,
			      T\pi_2 \rangle) \rangle \tag{\ref{p1}}                                                                                                                         \\
			      = & (f_A \times f_B) \circ \langle \mult \circ T(\pi_1 \circ \langle T\pi_1, T\pi_2 \rangle), \mult \circ T(\pi_2 \circ \langle T\pi_1,
			      T\pi_2 \rangle) \rangle                                                                                                                                        \\
			      = & (f_A \times f_B) \circ \langle \mult \circ T^2(\pi_1), \mult \circ T^2(\pi_2) \rangle \tag{\ref{p5}}                                                       \\
			      = & (f_A \times f_B) \circ \langle T\pi_1 \circ \mult, T\pi_2 \circ \mult \rangle \tag{\ref{m6}}                                                               \\
			      = & (f_A \times f_B) \circ \langle T\pi_1, T\pi_2\rangle \circ \mult	\tag{\ref{p1}}
		      \end{align*}
		\item We let \(\pi_1 : \alg{A}\times\alg{B} \to \alg{A}\) be the first projection, and prove that it is a morphism in \(\ccat{C}^{\monad{T}}\).
		      \begin{align*}
			      \pi_1 \circ (f_A \times f_B) \circ \langle T\pi_1, T\pi_2\rangle & =	f_A \circ \pi_1 \circ \langle T\pi_1, T\pi_2\rangle \tag{\ref{p4}} \\
			                                                                       & = f_A \circ T\pi_1\tag{\ref{p5}}
		      \end{align*}
		      We prove similarly that \(\pi_2 : \alg{A}\times\alg{B} \to \alg{B}\) is a morphism in \(\ccat{C}^{\monad{T}}\).
		\item We now have to prove the universal property of that product,	i.e., that for all \(\alg{C} = (C, f_C)\) such that there exist
		      \(f: \alg{C} \to \alg{A}\) and \(g: \alg{C} \to \alg{B}\), there exists a unique \(h : \alg{C} \to \alg{A} \times \alg{B}\) such that \(f = \pi_1 \circ h\) and \(g = \pi_2 \circ h\).
		      A picture at the end of this part depicts the situation at the end of this proof.

		      Let \(h = \langle f, g \rangle\) be the mediating morphism into the product.
		      We first prove it is a morphism in \(\ccat{C}^{\monad{T}}\):
		      \begin{align*}
			      \langle f, g \rangle \circ f_C & = \langle f \circ f_C, g \circ
			      f_C\rangle \tag{\ref{p1}}                                                                                                                                      \\
			                                     & = \langle f_A \circ Tf, f_B \circ Tg \rangle \tag{Since \(f\) and \(g\)
			      are morphisms in \(\ccat{C}^{\monad{T}}\)}                                                                                                                     \\
			                                     & = (f_A \times f_B) \circ \langle Tf, Tg\rangle \tag{\ref{p2}}                                                                 \\
			                                     & = (f_A \times f_B) \circ \langle T(\pi_1 \circ \langle f, g\rangle), T(\pi_2 \circ \langle f, g\rangle)\rangle \tag{\ref{p5}} \\
			                                     & = (f_A \times f_B) \circ \langle T\pi_1 \circ T\langle f, g\rangle, T\pi_2 \circ T\langle f, g\rangle\rangle                  \\
			                                     & = (f_A \times f_B) \circ \langle T\pi_1, T\pi_2 \rangle \circ T\langle f, g\rangle \tag{\ref{p1}}
		      \end{align*}
		      That \(f = \pi_1 \circ h\) follows from \(\pi_1 \circ \langle f, g
		      \rangle = f\), similarly for \(g = \pi_2 \circ h\).

		      If there were another morphism \(h'\) with the same properties, we could
		      get a \(h': C \to A \times B\) that would contradict the uniqueness of
		      \(\langle f, g\rangle\) with respect to the product in \(\ccat{C}\). The
		      picture we obtain is the following:

		      {\centering

		      \begin{tikzpicture}
			      \node (TC) at (4, 6) {\(TC\)};
			      \node (C) at (4, 4.5) {\(C\)};
			      \draw[->] (TC) to node[right]{\(f_C\)} (C);
			      \node (TAxB) at (4, 2) {\(T(A \times B)\)};
			      \node (TAxTB) at (4, 0) {\(TA \times TB\)};
			      \node (AxB) at (4, -2) {\(A \times B\)};
			      \draw[->] (TAxB) to node[right]{\(\langle T\pi_1, T\pi_2\rangle\)} (TAxTB);
			      \draw[->] (TAxTB) to node[right, pos=0.4]{\(f_A \times f_B\)} (AxB);
			      \node (TA) at (0, 4) {\(TA\)};
			      \node (A) at (0, 2.5) {\(A\)};
			      \draw[->] (TA) to node[right]{\(f_A\)} (A);
			      \draw[->] (TC) to node[left, pos=0.4]{\(Tf\)} (TA);
			      \draw[->] (C) to node[above, pos=0.4]{\(f\)} (A);
			      \draw[->] (TAxB) to node[below]{\(T\pi_1\)} (TA);
			      \draw[->] (AxB) to [bend left] node[left]{\(\pi_1\)} (A);
			      \node (TB) at (8, 4) {\(TB\)};
			      \node (B) at (8, 2.5) {\(B\)};
			      \draw[->] (TB) to node[right]{\(f_B\)} (B);
			      \draw[->] (TC) to node[right, pos=0.4]{\(Tg\)} (TB);
			      \draw[->] (C) to node[above, pos=0.4]{\(g\)} (B);
			      \draw[->] (TAxB) to node[below=0.2cm]{\(T\pi_2\)} (TB);
			      \draw[->] (AxB) to [bend right] node[right]{\(\pi_2\)} (B);
		      \end{tikzpicture}

		      }
	\end{enumerate}

	We also note that the duplication \(\dupl\) is easily defined as a
	morphism in \(\ccat{C}^{\monad{T}}\), since

		{\centering

			\begin{tikzpicture}
				\node (TA) at (0, 4.5) {\(TA\)};
				\node (A) at (0, 3) {\(A\)};
				\node (TAxTA) at (5, 4.5) {\(TA \times TA\)};
				\node (AxA) at (5, 3) {\(A \times A\)};

				\draw [->] (TA) to node[left]{\(f_A\)} (A);
				\draw [->] (TAxTA) to node[right]{\((f_A \times f_A) \circ \langle T\pi_1, T\pi_2 \rangle \)} (AxA);
				\draw [->] (TA) to node[above]{\(T\dupl\)} (TAxTA);
				\draw [->] (A) to node[above]{\(\dupl\)} (AxA);
			\end{tikzpicture}

		}

	commutes trivially:
	\begin{align*}
		(f_A \times f_A) \circ \langle T\pi_1, T\pi_2 \rangle \circ T\dupl
		 & = (f_A \times f_A) \circ \langle T\pi_1 \circ T\dupl, T\pi_2 \circ T\dupl\rangle \tag{\ref{p1}} \\
		 & = (f_A \times f_A) \circ \langle T(\pi_1 \circ \dupl), T(\pi_2 \circ \dupl)\rangle              \\
		 & = (f_A \times f_A) \circ \langle T\id, T\id\rangle
		\tag{\ref{p13}}                                                                                    \\
		 & = (f_A \times f_A) \circ \langle \id, \id\rangle                                                \\
		 & = (f_A \times f_A) \circ \dupl \tag{\ref{p6}}                                                   \\
		 & = \dupl \circ f_A\tag{\ref{p14}}
	\end{align*}
\end{proof}

\subsection{Exponent-like Structures}

Eilenberg–Moore categories do not have exponents, but can be endowed with two structures that share similarities with exponents.
Below, they are named \enquote{internal} and \enquote{external}, but no \enquote{canonical} name for them is known.
The \enquote{external} is used, for instance, in \cite[Lemma 3.1.]{Mogelberg2009}.

\subsubsection{\enquote{Internal Exponents}}

\begin{theorem}\label{thm:expoalg}
	If \(\ccat{C}\) is cartesian closed and \(\monad{T}\) is (left) strong, letting \(\alg{A} = (A, f_A)\) be an object in \(\ccat{C}^{\monad{T}}\) and \(B\) be an object in \(\ccat{C}\), then \(B \texpo \alg{A} \eqdef (B \expo A, \lambda(f_A \circ T\ev \circ T\sym \circ \lst \circ \sym))\) is an object in \(\ccat{C}^{\monad{T}}\).
\end{theorem}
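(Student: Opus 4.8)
The plan is to check that the proposed action $g \eqdef \lambda(h)$, where $h \eqdef f_A \circ T\ev \circ T\sym \circ \lst \circ \sym : T(B \expo A) \times B \to A$, satisfies the two $\monad{T}$-algebra axioms of \autoref{sec:em}, namely $g \circ \unit = \id_{B\expo A}$ and $g \circ \mult = g \circ Tg$. Since $\ccat{C}$ is cartesian closed, currying $\lambda(\cdot)$ is a bijection $\ccat{C}(C \times B, A) \cong \ccat{C}(C, B \expo A)$, natural in $C$, with $\ev \circ (\lambda(k) \times \id) = k$ and $\lambda(k) \circ u = \lambda(k \circ (u \times \id))$. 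I would therefore verify each axiom after transposing it along this bijection, turning an equation between maps with codomain $B \expo A$ into an equation between maps with codomain $A$; the identity used throughout is $\ev \circ (g \times \id) = h$.

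For the unit axiom, transposing $g \circ \unit = \id_{B\expo A} = \lambda(\ev)$ reduces the claim to $h \circ (\unit \times \id) = \ev$. I would then slide the freshly introduced $\unit$ leftwards: naturality of $\sym$ rewrites $\sym \circ (\unit \times \id)$ as $(\id \times \unit) \circ \sym$; the strong-monad unit coherence (first diagram of the \enquote{Strong} clause) collapses $\lst \circ (\id \times \unit)$ to $\unit$; naturality of $\unit$ then carries it past $T\sym$ and $T\ev$; the symmetric-monoidal involution $\sym \circ \sym = \id$ cancels the two remaining symmetries; and finally the algebra unit law $f_A \circ \unit = \id$ leaves exactly $\ev$.

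For the multiplication axiom, transposing $g \circ \mult = g \circ Tg$ reduces the claim to $h \circ (\mult \times \id) = h \circ (Tg \times \id)$, and I would show both sides equal the common expression $f_A \circ Th' \circ \lst \circ \sym$, where $h' \eqdef f_A \circ T\ev \circ T\sym \circ \lst$ (so that $h = h' \circ \sym$). On the left-hand side, naturality of $\sym$, the strong-monad multiplication coherence $\lst \circ (\id \times \mult) = \mult \circ T\lst \circ \lst$, naturality of $\mult$ past $T\sym$ and $T\ev$, and the algebra multiplication law $f_A \circ \mult = f_A \circ Tf_A$ produce this expression. On the right-hand side, naturality of $\sym$, naturality of $\lst$, the identity $\ev \circ (g \times \id) = h$ (which replaces $T\ev \circ T(g \times \id)$ by $Th$), and once more the involution $T\sym \circ T\sym = \id$ produce the very same expression.

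The only real obstacle is bookkeeping: every strength and symmetry carries object subscripts that change under each rewrite, so I would keep careful track of the instances (e.g.\ $\lst_{B, B\expo A}$ versus $\lst_{B, T(B\expo A)}$, and $\sym_{T(B\expo A),B}$ versus $\sym_{T^2(B\expo A),B}$) to be sure each coherence is applied at the correct objects. Conceptually there is no surprise: the multiplication axiom is precisely the strong-monad multiplication coherence married to the algebra multiplication law for $f_A$ and transported across the symmetry, so once the problem is recast through the currying bijection the verification becomes a guided diagram chase.
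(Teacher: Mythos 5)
Your proposal is correct and follows essentially the same route as the paper's proof: the paper also verifies (\ref{a1}) and (\ref{a2}) by a rewrite chain using (\ref{p11}), (\ref{s1}), (\ref{s2}), (\ref{s4}), (\ref{m4}), (\ref{m6}), (\ref{p12}), (\ref{a1})/(\ref{a2}) and (\ref{e1})--(\ref{e3}), the only cosmetic differences being that the paper keeps every equation inside \(\lambda(\cdot)\) (its use of (\ref{e1}) at the start and end of each chain is exactly your transposition) and writes the multiplication axiom as a single chain from \(g \circ \mult\) to \(g \circ Tg\) rather than meeting at a common middle term. That middle term is the same in both proofs, since your \(f_A \circ Th' \circ \lst \circ \sym\) coincides with the paper's intermediate \(\lambda(f_A \circ T(f_A \circ T\ev \circ T\sym \circ \lst \circ \sym) \circ T\sym \circ \lst \circ \sym)\) after cancelling \(T\sym \circ T\sym\).
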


\begin{proof}
	We need to show that \(\lambda(f_A \circ T\ev \circ T\sym \circ \lst \circ \sym) : T(B \expo A) \to B \expo A\) satisfies (\ref{a1}) and (\ref{a2}). We will use that, since \((A, f_A)\) is an object in \(\ccat{B}^{\monad{T}}\), \(f_A\) satisfies them.
	\begin{align*}
		\lambda (f_A \circ T\ev \circ T\sym \circ \lst \circ \sym) \circ \unit
		 & = \lambda (f_A \circ T\ev \circ T\sym \circ \lst \circ \sym \circ (\unit \times \id)) \tag{\ref{e1}} \\
		 & = \lambda (f_A \circ T\ev \circ T\sym \circ \lst \circ (\id \times	\unit)\circ \sym)\tag{\ref{p11}}   \\
		 & = \lambda (f_A \circ T\ev \circ T\sym \circ \unit \circ \sym)	\tag{\ref{s1}}                          \\
		 & = \lambda (f_A \circ T\ev \circ \unit \circ \sym \circ \sym) \tag{\ref{m4}}                          \\
		 & = \lambda (f_A \circ T\ev \circ \unit) \tag{\ref{p12}}                                               \\
		 & = \lambda (f_A \circ \unit \circ \ev) \tag{\ref{m4}}                                                 \\\
		 & = \lambda (\ev) \tag{\ref{a1}}                                                                       \\
		 & = \id \tag{\ref{e2}}
	\end{align*}

	\begin{align*}
		  & \lambda (f_A \circ T\ev \circ T\sym \circ \lst \circ \sym) \circ \mult                                                                                      \\
		= & \lambda (f_A \circ T\ev \circ T\sym \circ \lst \circ \sym \circ (\mult \times \id)) \tag{\ref{e1}}                                                          \\
		= & \lambda (f_A \circ T\ev \circ T\sym \circ \lst \circ (\id \times	\mult) \circ \sym) \tag{\ref{p11}}                                                          \\
		= & \lambda (f_A \circ T\ev \circ T\sym \circ \mult \circ T\lst \circ \lst \circ \sym) \tag{\ref{s2}}                                                           \\
		= & \lambda (f_A \circ T\ev \circ \mult \circ T^2(\sym) \circ T\lst \circ \lst \circ \sym) \tag{\ref{m6}}                                                       \\
		= & \lambda (f_A \circ \mult \circ T^2(\ev) \circ T^2(\sym) \circ T\lst \circ \lst \circ \sym) \tag{\ref{m6}}                                                   \\
		= & \lambda (f_A \circ Tf_A \circ T^2(\ev) \circ T^2(\sym) \circ T\lst \circ \lst \circ \sym) \tag{\ref{a2}}                                                    \\
		= & \lambda (f_A \circ Tf_A \circ T^2(\ev) \circ T^2(\sym) \circ T\lst \circ T\sym \circ T\sym \circ \lst \circ \sym) \tag{\ref{p12}}                           \\
		= & \lambda (f_A \circ T(f_A \circ T\ev \circ T\sym \circ \lst \circ \sym) \circ T\sym \circ \lst \circ \sym)                                                   \\
		= & \lambda (f_A \circ T(\ev \circ (\lambda (f_A \circ T\ev \circ T\sym \circ \lst \circ \sym) \times \id)) \circ T\sym \circ \lst \circ \sym) \tag{\ref{e3}}   \\
		= & \lambda (f_A \circ T\ev \circ T (\lambda (f_A \circ T\ev \circ T\sym \circ \lst \circ \sym) \times \id)\circ T\sym \circ \lst \circ \sym)                   \\
		= & \lambda (f_A \circ T\ev \circ T\sym \circ T (\id \times (\lambda	(f_A \circ T\ev \circ T\sym \circ \lst \circ \sym))) \circ \lst \circ \sym) \tag{\ref{p11}} \\
		= & \lambda (f_A \circ T\ev \circ T\sym \circ \lst \circ (\id \times T(\lambda (f_A \circ T\ev \circ T\sym \circ \lst \circ \sym)))\circ \sym) \tag{\ref{s4}}   \\
		= & \lambda (f_A \circ T\ev \circ T\sym \circ \lst \circ \sym \circ (T(\lambda (f_A \circ T\ev \circ T\sym \circ \lst \circ \sym)) \times \id)) \tag{\ref{p11}} \\
		= & \lambda (f_A \circ T\ev \circ T\sym \circ \lst \circ \sym) \circ T(\lambda (f_A \circ T\ev \circ T\sym \circ \lst \circ \sym)) \tag{\ref{e1}}
	\end{align*}
\end{proof}

Note that if \(\forg : \ccat{C}^{\monad{T}} \to \ccat{C}\) is the forgetful functor associated with \(\monad{T}\), then
\begin{align*}
	\forg (B \texpo \alg{A}) & = \forg (B \expo A, \lambda(f_A \circ T\ev \circ T\sym \circ \lst \circ \sym)) \\
	                         & = B \expo A                                                                    \\
	                         & = B \expo (\forg\, (A, f_A))                                                   \\
	                         & = B \expo (\forg \,\alg{A})
\end{align*}

\begin{remark}
	Note that \(\ev \circ \sym\) is in \(\AHom_{\ccat{C}}(A \times A \texpo \alg{B},	\alg{B})\):
	\begin{align*}
		  & \ev \circ \sym \circ (\id \times \lambda(f_B \circ T\ev \circ T\sym \circ \rst \circ \sym)) \\
		= & \ev \circ (\lambda(f_B \circ T\ev \circ T\sym \circ \rst \circ \sym) \times \id) \circ \sym \\
		= & f_B \circ T\ev \circ T\sym \circ \rst \circ \sym \circ \sym \tag{\ref{e3}}                  \\
		= & f_B \circ T\ev \circ T\sym \circ \rst \tag{\ref{p12}}
	\end{align*}
\end{remark}

\subsubsection{\enquote{External Exponents}}
\begin{theorem}\label{thm:iso2}
	If \(\ccat{C}\) has all equalizers, exponents and products, for every object \(C\) in \(\ccat{C}\), and algebras \(\alg{A} = (A, f_A)\),
	\(\alg{B} = (B, f_B)\) in \(\ccat{C}^{\monad{T}}\), there exists an object \(\alg{A} \mm \alg{B}\) in \(\ccat{C}\) such that \(\AHom_{\ccat{C}}(C \times \alg{A}, \alg{B}) \cong \Hom_{\ccat{C}}(C, \alg{A} \mm \alg{B})\) .
\end{theorem}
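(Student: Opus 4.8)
The plan is to realize $\AHom_{\ccat{C}}(C \times \alg{A}, \alg{B})$ as an equalizer inside the ordinary hom-set $\Hom_{\ccat{C}}(C, A \expo B)$, using the currying bijection supplied by the exponents. Since $\ccat{C}$ has exponents, transposition gives a bijection $\Hom_{\ccat{C}}(C \times A, B) \cong \Hom_{\ccat{C}}(C, A \expo B)$, $f \mapsto \lambda f$, with $f = \ev \circ (\lambda f \times \id)$. The whole game is to rewrite the defining equation of $\AHom$, namely $f \circ (\id \times f_A) = f_B \circ Tf \circ \lst$ (an equality of morphisms $C \times TA \to B$), as an equation $L \circ \lambda f = R \circ \lambda f$ between two composites $C \to (TA \expo B)$ in which $L$ and $R$ are fixed morphisms $(A \expo B) \to (TA \expo B)$ that do not depend on $f$.

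First I would transpose the left-hand side. Using $f = \ev \circ (\lambda f \times \id)$ and functoriality of $\times$, one gets $f \circ (\id \times f_A) = \ev \circ (\id_{A\expo B} \times f_A) \circ (\lambda f \times \id_{TA})$, so its transpose is $L \circ \lambda f$, where I define $L$ to be $\lambda(\ev \circ (\id \times f_A)) : (A \expo B) \to (TA \expo B)$ (precomposition with $f_A$ inside the exponent). The right-hand side is the delicate one: writing $Tf = T\ev \circ T(\lambda f \times \id)$ and invoking naturality of the strength $\lst$ in its first argument, $T(\lambda f \times \id) \circ \lst_{C, A} = \lst_{(A\expo B), A} \circ (\lambda f \times \id_{TA})$, so that $f_B \circ Tf \circ \lst = f_B \circ T\ev \circ \lst_{(A\expo B), A} \circ (\lambda f \times \id_{TA})$, whose transpose is $R \circ \lambda f$ with $R$ defined to be $\lambda(f_B \circ T\ev \circ \lst_{(A\expo B), A}) : (A \expo B) \to (TA \expo B)$, again independent of $f$. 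Because $\lambda$ is a bijection, $f$ lies in $\AHom_{\ccat{C}}(C \times \alg{A}, \alg{B})$ if and only if $L \circ \lambda f = R \circ \lambda f$.

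I would then set $\alg{A} \mm \alg{B} \eqdef \oeq_{L, R}$, the equalizer of $L$ and $R$ (which exists by hypothesis), with equalizing morphism $\meq_{L, R} : \alg{A} \mm \alg{B} \to (A \expo B)$. By the universal property of the equalizer, a morphism $k : C \to (A \expo B)$ satisfies $L \circ k = R \circ k$ exactly when it factors uniquely as $k = \meq_{L, R} \circ \bar{k}$ for a unique $\bar{k} : C \to \alg{A} \mm \alg{B}$. Composing the two bijections—transposition $f \mapsto \lambda f$, which identifies $\AHom_{\ccat{C}}(C \times \alg{A}, \alg{B})$ with the set of $k$ equalizing $L$ and $R$, and the universal property of $\oeq_{L,R}$, which identifies that set with $\Hom_{\ccat{C}}(C, \alg{A} \mm \alg{B})$—yields the desired isomorphism; one checks it is moreover natural in $C$, since each step is.

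The main obstacle is the second paragraph's treatment of the right-hand side: the appearance of $Tf$ seems to force the would-be map $R$ to depend on $f$, and the crux is to push $T(\lambda f \times \id)$ past the strength via naturality of $\lst$ in its first component, isolating $\lambda f$ on the outside so that $R$ becomes a fixed, $f$-independent morphism. Everything else is the standard currying-and-equalizer packaging.
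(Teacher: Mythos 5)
Your proposal is correct and takes essentially the same route as the paper's own proof: the paper also defines \(\alg{A} \mm \alg{B}\) as the equalizer of the two transposed morphisms \(\lambda(\ev \circ (\id \times f_A))\) and \(\lambda(f_B \circ T\ev \circ \mathrm{st})\) from \(A \expo B\) to \(TA \expo B\), and obtains the isomorphism by exactly your currying-plus-equalizer argument (via \ref{e1}, \ref{e3} and the strength naturality \ref{s4}), the only difference being presentational — the paper spells out the two directions as explicit maps \(\Theta f = m_f\) and \(\Omega g = \lambda^{-1}(\meq \circ g)\) and checks \(\Theta \circ \Omega = \Omega \circ \Theta = \id\), where you compose two bijections. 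Incidentally, the paper writes the strength as \(\rst\) in its proof even though its definition of \(\AHom\) uses \(\lst\), so your \(\lst_{(A \expo B), A}\) is the type-correct reading.
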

\begin{proof}
	We have to
	\begin{enumerate}
		\item give the definition of the object \(\alg{A} \mm \alg{B}\),
		\item construct \(\Theta : \AHom_{C}(C \times \alg{A},\alg{B}) \to \Hom_{\ccat{C}}(C, \alg{A} \mm \alg{B})\),
		\item construct	\(\Omega : \Hom_{\ccat{C}}(C, \alg{A} \mm \alg{B}) \to \AHom_{C}(C \times \alg{A}, \alg{B})\) and
		\item prove that \(\Theta \circ \Omega = \Omega \circ \Theta = \id\).
	\end{enumerate}
	\begin{enumerate}
		\item Let \((\alg{A} \mm \alg{B}, \meq)\) be the equalizer of \(\lambda(\ev \circ (\id \times f_A)) : (A \expo B) \to (TA \expo B)\) and \(\lambda(f_B \circ T \ev \circ \rst) : (A \expo B) \to (TA \expo B)\):

		      {\centering

		      \begin{tikzpicture}
			      \node (eq) at (1, 0) {\(\alg{A} \mm \alg{B}\)};
			      \node (AtoB) at (3, 0) {\(A \expo B\)};
			      \node (TAtoB) at (8, 0) {\(TA \expo B\)};
			      \draw[->, bend left] (AtoB) to node[above]{\(\lambda(\ev \circ (\id \times f_A))\)}(TAtoB);
			      \draw[->, bend right] (AtoB) to node[below]{\(\lambda(f_B \circ T\ev \circ \rst)\)} (TAtoB);
			      \draw[->] (eq) to node[above]{\(\meq\)} (AtoB);
		      \end{tikzpicture}

		      }

		\item Given \(f\) in \(\AHom_{\ccat{C}}(C \times \alg{A}, \alg{B})\), we let \(\Theta f\) be the morphism \(m_f : C \to \alg{A} \mm \alg{B}\) given
		      by \((\alg{A} \mm \alg{B}, \meq)\):

		      {\centering

		      \begin{tikzpicture}
			      \node (C) at (0, 2) {\(C\)};
			      \node (eq) at (0, 0) {\(\alg{A} \mm \alg{B}\)};
			      \node (AtoB) at (3, 0) {\(A \expo B\)};
			      \node (TAtoB) at (6, 0) {\(TA \expo B\)};
			      \draw[->, bend left] (AtoB) to (TAtoB);
			      \draw[->, bend right] (AtoB) to (TAtoB);
			      \draw[->, dashed] (C) to node[left]{\(\exists ! \,m_f\)} (eq);
			      \draw[->] (eq) to node[above]{\(\meq\)} (AtoB);
			      \draw[->] (C) to node[above]{\(\lambda f\)} (AtoB);
		      \end{tikzpicture}

		      }

		      We verify that the property of the equalizer %
		      can indeed be used:
		      \begin{align*}
			        & \lambda(\ev \circ (\id \times f_A)) \circ \lambda f                                              \\
			      = & \lambda(\ev \circ (\id \times f_A) \circ (\lambda f \times \id))\tag{\ref{e1}}                   \\
			      = & \lambda(\ev \circ (\lambda f \times \id) \circ (\id \times f_A))\tag{\ref{p9}}                   \\
			      = & \lambda(f \circ (\id \times f_A))\tag{\ref{e3}}                                                  \\
			      = & \lambda(f_B \circ Tf \circ \rst)\tag{Since \(f\) is in \(\AHom_{C}(C \times \alg{A}, \alg{B})\)} \\
			      = & \lambda(f_B \circ T(\ev \circ (\lambda f \times \id)) \circ \rst)\tag{\ref{e3}}                  \\
			      = & \lambda(f_B \circ T \ev \circ T(\lambda f \times \id) \circ \rst)                                \\
			      = & \lambda(f_B \circ T \ev \circ \rst \circ (\lambda f \times T\id)) \tag{\ref{s4}}                 \\
			      = & \lambda(f_B \circ T \ev \circ \rst \circ (\lambda f \times \id))                                 \\
			      = & \lambda(f_B \circ T\ev \circ \rst) \circ \lambda f \tag{\ref{e1}}
		      \end{align*}
		\item Given \(g\) in \(\Hom_{\ccat{C}}(C, \alg{A} \mm \alg{B})\), we define
		      \(\Omega g\) to be \(\lambda^{-1}(\meq \circ g)\). We prove that it is
		      a morphism in \(\AHom_{\ccat{C}}(C \times \alg{A}, \alg{B})\) using that
		      \(\meq\) is the equalizer of \(\lambda(\ev \circ (\id \times f_A))\) and
		      \(\lambda(f_B \circ T \ev \circ \rst)\):
		      \begin{align*}
			               & \lambda(\ev \circ (\id \times f_A)) \circ \meq = \lambda(f_B \circ T\ev \circ \rst) \circ \meq                                             \\
			      \implies & \lambda(\ev \circ (\id \times f_A)) \circ \meq \circ g= \lambda(f_B \circ T\ev \circ \rst) \circ \meq \circ g                              \\
			      \implies & \lambda^{-1}(\lambda(\ev \circ (\id \times f_A)) \circ \meq \circ g) = \lambda^{-1}(\lambda(f_B \circ T\ev \circ \rst) \circ \meq \circ g) \\
			      \implies & \lambda^{-1}(\lambda(\ev \circ (\id \times f_A))) \circ ((\meq \circ g) \times \id)                                                        \\
			               & \hspace{2em} = \lambda^{-1}(\lambda(f_B \circ T\ev	\circ \rst)) \circ ((\meq \circ g)\times \id) \tag{\ref{e4}}                             \\
			      \implies & \ev \circ (\id \times f_A) \circ ((\meq \circ g) \times \id) = f_B \circ T\ev \circ \rst \circ ((\meq \circ g)\times \id)\tag{\ref{e6}}    \\
			      \implies & \ev \circ ((\meq \circ g) \times \id)\circ (\id \times f_A)
			      = f_B \circ T\ev \circ \rst \circ ((\meq \circ g)\times T\id) \tag{\ref{p9}}                                                                          \\
			      \implies & \ev \circ ((\meq \circ g) \times \id)\circ (\id \times f_A) = f_B \circ T\ev \circ T((\meq \circ g)\times \id) \circ \rst \tag{\ref{s4}}   \\
			      \implies & \ev \circ ((\meq \circ g) \times \id)\circ (\id \times f_A)
			      = f_B \circ T (\ev \circ ((\meq \circ g)\times \id)) \circ \rst                                                                                       \\
			      \implies & \ev \circ (\lambda(\lambda^{-1}(\meq \circ g)) \times \id)\circ (\id \times f_A)                                                           \\
			               & \hspace{2em}= f_B \circ T (\ev \circ (\lambda(\lambda^{-1}(\meq \circ g))\times \id)) \circ \rst\tag{\ref{e5}}                             \\
			      \implies & \lambda^{-1}(\meq \circ g) \circ (\id \times f_A) = f_B
			      \circ T (\lambda^{-1}(\meq \circ g)) \circ \rst\tag{\ref{e3}}
		      \end{align*}
		\item Given \(g\) in \(\Hom_{\ccat{C}}(C, \alg{A} \mm \alg{B})\), \(\Theta(\Omega g) = \Theta (\lambda^{-1}(\meq \circ g))\) is the unique morphism \(m_{\lambda^{-1}(\meq \circ g))}\):

		      {\centering

		      \begin{tikzpicture}
			      \node (C) at (0, 2) {\(C\)};
			      \node (eq) at (0, 0) {\(\alg{A} \mm \alg{B}\)};
			      \node (AtoB) at (3, 0) {\(A \expo B\)};
			      \node (TAtoB) at (5, 0) {\(TA \expo B\)};
			      \draw[->, bend left] (AtoB) to (TAtoB);
			      \draw[->, bend right] (AtoB) to (TAtoB);
			      \draw[->, dashed] (C) to node[left]{\(\exists ! \,m_{\lambda^{-1}(\meq \circ g))}\)} (eq);
			      \draw[->] (eq) to node[above]{\(\meq\)} (AtoB);
			      \draw[->] (C) to node[right]{\(\lambda(\lambda^{-1}(\meq \circ g)))\)} (AtoB);
		      \end{tikzpicture}

		      }

		      But since \(\lambda(\lambda^{-1}(\meq \circ g))) = \meq \circ g\), we
		      have that \(m_{\lambda^{-1}(\meq \circ g))} = g\). Moreover, given \(f\)
		      in \(\AHom_{\ccat{C}}(C \times \alg{A}, \alg{B})\), \(\Omega( \Theta f) =
		      \lambda^{-1}(\meq \circ \Theta f )\) where \(\Theta f\) is the unique
		      morphism \(m_f\) such that \(\meq \circ m_f = \lambda f\). Hence,
		      \(\Omega( \Theta f) = \lambda^{-1}(\meq \circ m_f) =
		      \lambda^{-1}(\lambda f) = f\).

		      We conclude that \(\Theta \circ \Omega = \Omega \circ \Theta = \id\). \qedhere
	\end{enumerate}
\end{proof}

\begin{remark}
	Note that \(\ev_{\mm} \eqdef (\meq \times \id) \circ \ev\) is in \(\AHom_{\ccat{C}}(\alg{A} \mm \alg{B}\times \alg{A}, \alg{B})\): \begin{align*}
		\ev \circ (\meq \times \id) \circ (\id \times f_A) & = \ev \circ (\id \times f_A) \circ (\meq \times \id) \tag{\ref{p9}}                                                             \\
		                                                   & = \lambda^{-1}(\lambda(\ev \circ (\id \times f_A) \circ (\meq \times \id))\tag{\ref{e6}}                                        \\
		                                                   & = \lambda^{-1}(\lambda(\ev \circ (\id \times f_A)) \circ \meq)\tag{\ref{e1}}                                                    \\
		                                                   & = \lambda^{-1}(\lambda(f_B \circ T\ev \circ \rst) \circ \meq)\tag{Since \(\meq\) is the equalizer of \( \lambda (\ev \circ (\id
			\times f_A)) \) and \( \lambda(f_B \circ T\ev \circ \rst)\)}                                                                                                                         \\
		                                                   & = \lambda^{-1}(\lambda(f_B \circ T\ev \circ \rst \circ (\meq \times	\id))\tag{\ref{e1}}                                          \\
		                                                   & = f_B \circ T\ev \circ \rst \circ (\meq\times \id) \tag{\ref{e6}}                                                               \\
		                                                   & = f_B \circ T\ev \circ T(\meq\times \id)\circ \rst \tag{\ref{s4}}
	\end{align*}
\end{remark}

\subsubsection{Connecting the Internal and the External Exponents}

It is conjectured that for every object \(C\) in \(\ccat{C}\), and all algebras \(\alg{A} = (A, f_A)\), \(\alg{B} = (B, f_B)\) in \(\ccat{C}^{\monad{T}}\),
\[\Hom_{\ccat{C}}(C, \alg{A} \mm \alg{B}) \cong \Hom_{\ccat{C}^{\monad{T}}}(\alg{A}, C \texpo \alg{B})\]

However, the proof attempts require to be explicit about the associativity, and seemed doubtful.

\printbibliography[heading=bibintoc]

\clearpage

\appendix

\chapter{Cheat Sheets}
\label{cheatsheet}
\section{Cartesian Structure}
\label{sec:cheat-cartesian}
\begin{description}[style=unboxed]
	\item[Product]
	      \begin{subequations}
		      \renewcommand{\theequation}{p\textsubscript{\arabic{equation}}}
		      \begin{align}
			      \langle f, g\rangle                         & =_{\text{def}} (f \times g) \circ \dupl \label{p19}   \\
			      \sym                                        & =_{\text{def}} \pi_2 \times \pi_1                     \\
			      \dupl                                       & =_{\text{def}} \langle \id, \id \rangle \label{p6}    \\
			      \notag                                                                                              \\
			      (f_1 \times g_1) \circ (f_2 \times g_2)     & = (f_1 \circ f_2) \times (g_1 \times g_2) \label{p16} \\
			      \langle f, g \rangle \circ h                & = \langle f \circ h, g \circ h\rangle \label{p1}      \\
			      (f \times g) \circ \langle h_1, h_2 \rangle & = \langle f \circ h_1, g \circ h_2\rangle \label{p2}  \\
			      \langle \pi_1, \pi_2\rangle                 & = \id \label{p3}                                      \\
			      \pi_i \circ (f_1 \times f_2)                & = f_i \circ \pi_i \label{p4}                          \\
			      \pi_i \circ \langle f_1, f_2\rangle         & = f_i \label{p5}                                      \\
			      f \circ \pi_2                               & = \pi_2 \circ (\id \times f) \label{p7}               \\
			      f \circ \pi_1                               & = \pi_1 \circ (f \times \id) \label{p8}               \\
			      f \times g                                  & = (f \times \id) \circ (\id \times g) \notag          \\
			                                                  & = (\id \times g) \circ (f \times \id) \label{p9}      \\
			      (f \circ g) \times \id                      & = (f \times \id) \circ (g\times \id) \label{p15}      \\
			      (f \times g) \circ \sym                     & = \sym \circ (g \times f) \label{p11}                 \\
			      \sym \circ \sym                             & = \id \label{p12}                                     \\
			      \pi_i \circ \dupl                           & = \id \label{p13}                                     \\
			      (f \times f) \circ \dupl                    & = \dupl \circ f \label{p14}
		      \end{align}
	      \end{subequations}
	\item[Exponents]
	      \begin{subequations}
		      \renewcommand{\theequation}{e\textsubscript{\arabic{equation}}}
		      \begin{align}
			      \lambda f \circ g                & = \lambda(f \circ (g \times \id)) \label{e1}      \\
			      \lambda \ev                      & = \id \label{e2}                                  \\
			      \ev \circ (\lambda f \times \id) & = f\label{e3}                                     \\
			      \lambda^{-1}(f \circ g)          & = (\lambda^{-1}f) \circ ( g \times \id)\label{e4} \\
			      \lambda \lambda^{-1} f           & = f \label{e5}                                    \\
			      \lambda^{-1} \lambda f           & = f \label{e6}                                    \\
			      f = g                            & \iff \lambda^{-1}f = \lambda^{-1} g \label{e7}    \\
			      f = g                            & \iff \lambda f = \lambda g \label{e8}             \\
			      \ev \circ (f \times \id)         & = \lambda^{-1}(f) \label{e9}
		      \end{align}
	      \end{subequations}
	\item[Associativity]
	      \begin{subequations}
		      \renewcommand{\theequation}{as\textsubscript{\arabic{equation}}}
		      \begin{align}
			      \langle \pi_1 \circ \pi_1, \pi_2 \times \id \rangle & \eqdef \assoc \label{defassoc}                                         \\
			      \langle \id \times \pi_1, \pi_2 \circ \pi_2 \rangle & \eqdef \assoc^{-1} \label{defassocinv}                                 \\
			      \notag                                                                                                                       \\
			      \assoc^{-1} \circ \assoc                            & = \id \label{assoc6}                                                   \\
			      \assoc \circ \assoc^{-1}                            & = \id \label{assoc7}                                                   \\
			      \assoc \circ \sym \circ \assoc                      & = (\id \times \sym) \circ \assoc \circ (\sym \times \id)\label{assoc1} \\
			      (f_1 \times (f_2 \times f_3)) \circ \assoc          & = \assoc \circ ((f_1 \times f_2) \times f_3) \label{assoc3}            \\
			      \assoc^{-1} \circ (f_1 \times (f_2 \times f_3))     & = ((f_1 \times f_2) \times f_3) \circ \assoc^{-1}\label{assoc4}
		      \end{align}
	      \end{subequations}
\end{description}
\section{Monadic Structure}
\label{sec:cheat-monad}
\begin{description}[style=unboxed]
	\item[Monad]
	      \begin{subequations}
		      \renewcommand{\theequation}{m\textsubscript{\arabic{equation}}}
		      \begin{align}
			      \mult \circ \mult   & = \mult \circ T\mult \label{m1} \\
			      \mult \circ T \unit & = \id \label{m2}                \\
			      \mult \circ \unit   & = \id \label{m3}                \\
			      \unit \circ f       & = Tf \circ \unit \label{m4}     \\
			      Tf \circ \mult      & = T\mult \circ T^2 f \label{m5} \\
			      Tf \circ \mult      & =\mult \circ T^2 f\label{m6}
		      \end{align}
	      \end{subequations}
	\item[(Left) Strength]
	      \begin{subequations}
		      \renewcommand{\theequation}{s\textsubscript{\arabic{equation}}}
		      \begin{align}
			      \lst \circ (\id \times \unit) & = \unit \label{s1}                                   \\
			      \lst \circ (\id \times \mult) & = \mult \circ T\lst \circ \lst\label{s2}             \\
			      T\pi_2 \circ \lst             & = \pi_2\label{s3}                                    \\
			      \lst \circ (f \times Tg)      & = T(f \times g) \circ \lst \label{s4}                \\
			      T\assoc \circ \lst            & = \lst\circ (\id \times \lst) \circ \assoc\label{s5}
		      \end{align}
	      \end{subequations}
	\item[Algebras]
	      \begin{subequations}
		      {
			      \renewcommand{\theequation}{al\textsubscript{\arabic{equation}}}
			      \begin{align}
				      f_A \circ \unit & = \id \label{a1}            \\
				      f_A \circ \mult & = f_A \circ Tf_A \label{a2}
			      \end{align}
		      }
	      \end{subequations}
\end{description}

\end{document}